\documentclass[11pt]{article}

\evensidemargin0cm \oddsidemargin0cm \textwidth16cm
\textheight23cm \topmargin-2cm

\usepackage{amsmath}
\usepackage{amsthm}
\usepackage{amsfonts}
\usepackage{bbm}
\usepackage{amssymb}
\usepackage{color}
\usepackage{xcolor}
\usepackage{hyperref}

\newcommand{\me}{\mathbb{E}}
\newcommand{\mr}{\mathbb{R}}

\newcommand{\mn}{\mathbb{N}}
\newcommand{\mmp}{\mathbb{P}}

\newcommand{\eee}{{\rm e}}
\DeclareMathOperator{\1}{\mathbbm{1}}

\newtheorem{thm}{Theorem}[section]
\newtheorem{lemma}[thm]{Lemma}

\newtheorem{cor}[thm]{Corollary}

\newtheorem{assertion}[thm]{Proposition}
\theoremstyle{definition}

\theoremstyle{remark}
\newtheorem{rem}[thm]{Remark}

\begin{document}

\title{Limit theorems for discounted convergent perpetuities II}\date{}
\author{Alexander Iksanov\footnote{Faculty of Computer Science and Cybernetics, Taras Shevchenko National University of Kyiv, Ukraine; e-mail address:
iksan@univ.kiev.ua} \ \ Alexander Marynych\footnote{Faculty of Computer Science and Cybernetics, Taras Shevchenko National University of Kyiv, Ukraine; e-mail address:
marynych@unicyb.kiev.ua} \ \ Anatolii Nikitin\footnote{Faculty of Natural Sciences, Jan Kochanowski University of Kielce, Poland and Faculty of Economics, National University of Ostroh Academy, Ukraine; e-mail address: anatolii.nikitin@ujk.edu.pl}
}
\maketitle
\begin{abstract}
\noindent Let $(\xi_1, \eta_1)$, $(\xi_2, \eta_2),\ldots$ be independent identically distributed $\mathbb{R}^2$-valued random vectors. Assuming that $\xi_1$ has zero mean and finite variance and imposing three distinct groups of assumptions on the distribution of $\eta_1$ we prove three functional limit theorems for the logarithm of convergent discounted perpetuities $\sum_{k\geq 0}\eee^{\xi_1+\ldots+\xi_k-ak}\eta_{k+1}$ as $a\to 0+$. Also, we prove a law of the iterated logarithm which corresponds to one of the aforementioned functional limit theorems. The present paper continues a line of research initiated in the paper Iksanov, Nikitin and Samoillenko (2022), which focused on limit theorems for a different type of convergent discounted perpetuities.
\end{abstract}

\noindent Key words: exponential functional of Brownian motion; functional central limit theorem; law of the iterated logarithm; perpetuity

\noindent 2000 Mathematics Subject Classification: Primary: 60F15,60F17 \\
\hphantom{2000 Mathematics Subject Classification: } Secondary: 60G50, 60G55

\section{Introduction}

Let $(\xi_1, \eta_1)$, $(\xi_2,\eta_2),\ldots$ be independent copies of an $\mr^2$-valued random vector $(\xi,\eta)$ with  arbitrarily dependent components. Denote by $(S_k)_{k\in\mn_0}$ (as usual, $\mn_0:=\mn\cup\{0\}$) the standard random walk with jumps $\xi_k$ defined by $S_0:=0$ and $S_k:=\xi_1+\ldots+\xi_k$ for $k\in\mn$. Whenever a random series $\sum_{k\geq 0}\eee^{S_k}\eta_{k+1}$ converges almost surely (a.s.), its sum is called {\it perpetuity} because of the following financial application. Assuming temporarily that $\eta$ is a.s.\ positive, the variables $\eta_{k+1}$ and $\eee^{\xi_k}$ may be interpreted as the planned payoff of a private pension fund to a client and the discount factor for year $k\in\mn_0$, respectively. The pension payoffs to a client are made at the beginning of each year. The variable $\sum_{k\geq 0}\eee^{S_k}\eta_{k+1}$ can be thought of as a perpetuity, that is, the present value of a permanent commitment to make a payoff annually into the future forever. In other words, this is the initial payment to a fund and, for $k\in\mn_0$, $\eee^{S_k}\eta_{k+1}$ is an amount ensuring that a client gets the planned payoff $\eta_{k+1}$ at the beginning of year $k\in\mn_0$.

It is known (see Lemma 1.7 in \cite{Vervaat:1979} or Theorem 2.1 in \cite{Goldie+Maller:2000}) that the conditions $\me \xi\in [-\infty,0)$ and $\me \log^+ |\eta|<\infty$ ensure that the series $\sum_{k\geq 0}\eee^{S_k}\eta_{k+1}$ (absolutely) converges a.s. Recall that $\log^+x=\log x$ if $x\geq 1$ and $=0$ if $x\in (0,1)$. Further detailed information on perpetuities, accumulated up to 2016, can be found in the books \cite{Buraczewski et al:2016} and \cite{Iksanov:2016}.

\subsection{Previously investigated discounted perpetuities}

There are several ways to define a {\it discounted convergent perpetuity}. One option is
$$
X(b):=\sum_{k\geq 0}b^{S_k}\eta_{k+1},\quad b\in (0,1)
$$
or equivalently $\sum_{k\geq 0}\eee^{-S_k/t}\eta_{k+1}$ for $t>0$. In the recent article \cite{Iksanov+Nikitin+Samoilenko:2021} basic limit theorems for $X(b)$, properly normalized, as $b\to 1-$, were proved, namely, a strong law of large numbers, a functional central limit theorem and a law of the iterated logarithm. To be more specific, we state a combination of Theorem 1.2 and one part of Theorem 1.5 in \cite{Iksanov+Nikitin+Samoilenko:2021} as Proposition \ref{reminder}. Denote by $C=C(0,\infty)$ the space of continuous functions defined on $(0,\infty)$ equipped with the locally uniform topology. Throughout the paper we write $\Longrightarrow$ to denote weak convergence of probability measures in a function space.
\begin{assertion}\label{reminder}
Assume that $\mu=\me\xi\in (0,\infty)$, $\me \eta=0$ and ${\tt s}^2:={\rm Var}\,\eta \in (0,\infty)$. Then, as $b\to 1-$,
\begin{equation}\label{clt22}
\Big((1-b^2)^{1/2}\sum_{k\geq 0}b^{uS_k}\eta_{k+1}\Big)_{u>0} ~\Longrightarrow~ (2{\tt s}^2\mu^{-1})^{1/2}\Bigg(\int_{[0,\,\infty)}e^{-uy}{\rm d}B(y)\Bigg)_{u>0}
\end{equation}
on $C$, where $(B(t))_{t\geq 0}$ is a standard Brownian motion, and
\begin{equation}\label{auxlimsup}
{\lim\sup\,(\lim
\inf)}_{b\to 1-}\Big(\frac{1-b^2}{\log\log \frac{1}{1-b^2}}\Big)^{1/2}\sum_{k\geq 0}b^{S_k}\eta_{k+1}=+(-)(2{\tt s}^2 \mu^{-1})^{1/2}\quad\text{{\rm a.s.}}
\end{equation}
\end{assertion}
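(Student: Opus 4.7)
Setting $a:=-\log b>0$ (so $1-b^2\sim 2a$ as $b\to 1-$) and $X_a(u):=\sum_{k\ge 0}\eee^{-auS_k}\eta_{k+1}$, the FCLT \eqref{clt22} amounts to
\[
\sqrt{2a}\,X_a(\cdot)\,\Longrightarrow\,\sqrt{2{\tt s}^2/\mu}\,\Big(\int_{[0,\,\infty)} \eee^{-uy}\,{\rm d}B(y)\Big)_{u>0}\quad\text{on }C(0,\infty).
\]
The guiding heuristic: by the SLLN, $\eee^{-auS_k}$ is close to the deterministic weight $\eee^{-au\mu k}$, and the Riemann-sum identity $a\sum_{k\ge 0}f(ak)\to\int_0^\infty f(x)\,{\rm d}x$ then converts the discrete sum into the stochastic integral on the right. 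To realize this, I would condition on $\mathcal{F}^\xi:=\sigma(\xi_1,\xi_2,\ldots)$. Given $\mathcal{F}^\xi$, $\sqrt{2a}\,X_a(u)$ is a sum of independent centred random variables with weights $\sqrt{2a}\,\eee^{-auS_k}$ of order $\sqrt a$, so the Lindeberg condition is trivial. Its conditional variance $2a{\tt s}^2\sum_{k\ge 0}\eee^{-2auS_k}$ tends a.s.\ to ${\tt s}^2/(u\mu)$, which matches ${\rm Var}\bigl(\sqrt{2{\tt s}^2/\mu}\int_{[0,\,\infty)}\eee^{-uy}{\rm d}B(y)\bigr)$; joint covariances at $u_1,\ldots,u_n$ match analogously. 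A conditional Lindeberg--Feller CLT, combined with bounded convergence of conditional characteristic functions, then gives finite-dimensional convergence.

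\textbf{Tightness and LIL.} For tightness on each $[\delta,T]\subset(0,\infty)$ I would verify Kolmogorov's criterion by bounding $\me(\sqrt{2a}(X_a(u)-X_a(v)))^2=O((v-u)^2)$ uniformly in $a$; this follows from orthogonality of the centred $\eta_{k+1}$'s, a Lipschitz estimate on $u\mapsto\eee^{-auS_k}$ valid on $\{S_k\ge 0\}$ (which dominates asymptotically thanks to $\mu>0$), and a renewal-type bound of the form $\me\sum_{k\ge 0}S_k^2\eee^{-2a\delta S_k}=O(a^{-3})$. Combined with the previous step this establishes \eqref{clt22}. For the LIL \eqref{auxlimsup} I would again condition on $\mathcal{F}^\xi$ and apply a Kolmogorov-type LIL for independent, non-identically distributed summands $(\eee^{-aS_k}\eta_{k+1})_{k\ge 0}$, whose conditional variance $\sum_{k\ge 0}\eee^{-2aS_k}\sim(2a\mu)^{-1}$ a.s.; testing along a geometric subsequence $a_n=\rho^n$ and then sandwiching by monotonicity produces the $\pm\sqrt{2{\tt s}^2/\mu}$ constants.

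\textbf{Main obstacle.} The recurring difficulty is the coupling between the random walk $(S_k)$---which enters through the exponential weights---and the independent sequence $(\eta_k)$. Conditioning on $\mathcal{F}^\xi$ decouples them, but preserving uniformity then requires a.s.\ control of the Riemann sums $a\sum_{k\ge 0}f(aS_k)$ via the SLLN and uniform renewal-type moment bounds, and, in the LIL, verification of Kolmogorov's truncation condition under only the assumption ${\rm Var}\,\eta<\infty$. The tightness estimate is particularly delicate because $\xi$ is not assumed to have exponential moments, so the events $\{S_k<0\}$ must be shown to contribute negligibly to the exponentially weighted second moment; this forces one to split the sum at a deterministic index $k_0(a)\to\infty$ and handle the ``pre-stationary'' block via crude bounds before the SLLN kicks in.
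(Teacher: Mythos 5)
The paper does not prove Proposition~\ref{reminder}: the text explicitly introduces it as a restatement of Theorem~1.2 and one part of Theorem~1.5 from \cite{Iksanov+Nikitin+Samoilenko:2021}, cited as a known result. So there is no in-paper proof to compare against; I can only assess your sketch on its own terms. The overall architecture you propose (substitute $a=-\log b$, condition on $\mathcal{F}^\xi$ to decouple $(S_k)$ from $(\eta_k)$, conditional Lindeberg CLT for finite-dimensional convergence, SLLN-based identification of the conditional covariance $2a\mathtt{s}^2\sum_k\eee^{-2auS_k}\to\mathtt{s}^2/(u\mu)$, tightness on compacts $[\delta,T]\subset(0,\infty)$, and a conditional LIL along a geometric subsequence) is a sensible route, and your variance bookkeeping produces the correct constants on both sides.

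There is, however, a genuine gap in the tightness step. Proposition~\ref{reminder} assumes only $\me\xi=\mu\in(0,\infty)$ and says nothing about the negative exponential moments of $\xi$. Your Kolmogorov-criterion estimate rests on the unconditional second moment $\me\bigl(\sqrt{2a}\,(X_a(u)-X_a(v))\bigr)^2$ and the renewal-type bound $\me\sum_k S_k^2\eee^{-2a\delta S_k}=O(a^{-3})$. But already $\me\,\eee^{-2a\delta\xi}$ can be $+\infty$ (e.g.\ $\mmp\{\xi<-t\}\sim ct^{-2}$ is compatible with $\mu>0$), and then $\me\sum_k\eee^{-2auS_k}=\sum_k(\me\,\eee^{-2au\xi})^k=\infty$, so even $\me X_a(u)^2$ is infinite. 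The failure occurs already at $k=1$ and does not come from some far ``pre-stationary'' block that one could split off at a deterministic $k_0(a)$ and handle by crude bounds; your proposed remedy does not address it. A correct argument must stay conditional on $\mathcal{F}^\xi$ throughout the tightness step (the $\mathcal{F}^\xi$-conditional moments are a.s.\ finite by the SLLN) and then establish enough uniformity in $a$ of those random bounds, or else run a truncation in $\xi$ and exploit that the normalization $\sqrt{2a}\to 0$ annihilates the finitely many excursions of $(S_k)$ below any fixed level. Also note that the cited Theorem~1.2 proves \eqref{clt22} on $D(0,\infty)$ with the $J_1$-topology and only afterwards upgrades to $C$ via a.s.\ continuity of the prelimit, so if you insist on Kolmogorov's $C$-criterion you are choosing a different (and here more fragile) tightness route than the source. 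The same caution applies to the LIL: the classical Kolmogorov LIL is stated for $n\to\infty$, and your conditional-variance asymptotic $\sum_k\eee^{-2aS_k}\sim(2a\mu)^{-1}$ is an $a.s.$ statement whose rate of convergence must be controlled uniformly along the geometric subsequence $a_n=\rho^n$; that quantitative control is the content that is missing from the sketch.
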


Note that in the cited Theorem 1.2 weak convergence was stated on the Skorokhod space $D(0,\infty)$ of c\`{a}dl\`{a}g functions on $(0,\infty)$ equipped with the $J_1$-topology. Since the process on the left-hand side of \eqref{clt22} is a.s.\ continuous in $u$, the weak convergence also takes place on $C$. Corollary 1.5 in \cite{Iksanov+Kondratenko:2021} is an ultimate version of the functional central limit theorem for $X(b)$ in the case ${\rm Var}\,\xi<\infty$ and ${\tt s}^2<\infty$, which particularly strengthens \eqref{clt22}. In \cite{Iksanov+Kondratenko:2021}, the condition $\me \eta\in\mr$ is allowed, which is a new aspect in comparison to \eqref{clt22}.

In both \eqref{clt22} and \eqref{auxlimsup}, the random walk $(S_k)$ only provides a first-order contribution to the limit which is represented by the strong law of large numbers $\lim_{k\to\infty} k^{-1}S_k=\mu$ a.s. In other words, the limits remain unchanged on replacing $S_k$ on the left-hand sides with $\mu k$, see Theorem 1.1 in \cite{Bovier+Picco:1993} for the corresponding counterpart of \eqref{auxlimsup}. Limit relations \eqref{clt22} and \eqref{auxlimsup} are mainly driven by fluctuations of the random walk $(\eta_1+\ldots+\eta_n)$ as $n$ becomes large. More precisely, the main driving forces behind \eqref{clt22} and \eqref{auxlimsup} are the Donsker functional limit theorem and the law of the iterated logarithm for $(\eta_1+\ldots+\eta_n)$, respectively.

\subsection{New type of discounted perpetuities and main results}\label{sec:newtype}

Our standing assumptions throughout the paper are: $\eta$ is a.s.\ positive, $\me \log^+\eta<\infty$ and
\begin{equation}\label{10}
\me \xi=0\quad \text{and}\quad \sigma^2:={\rm Var}\,\xi\in (0,\infty).
\end{equation}
We shall investigate
$$
Y(a):=\sum_{k\geq 0}\eee^{S_k-ak}\eta_{k+1},\quad a>0,
$$
which is yet another type of discounted convergent perpetuity, and an accompanying process
$$
Z(a):=\sup_{k\geq 0}\,(S_k-ak+\log \eta_{k+1}),\quad a>0.
$$
By Theorem 2.1 in \cite{Goldie+Maller:2000}, the latter series converges a.s., that is, the perpetuity is indeed convergent. This implies that $\lim_{k\to\infty}(S_k-ak+\log \eta_{k+1})=-\infty$ a.s., whence $|Z(a)|<\infty$ a.s.\ for each $a>0$. Specifically, we shall prove functional limit theorems for $(\log Y(au))_{u>0}$, properly normalized, as $a\to 0+$ and a law of the iterated logarithm for $\log Y(a)$, again properly normalized. Note that $\lim_{a\to 0+} \sum_{k\geq 0}\eee^{S_k-ak}\eta_{k+1}=\sum_{k\geq 0}\eee^{S_k}\eta_{k+1}=+\infty$ a.s. Here, noting that our assumptions entail $\mmp\{\eta+\eee^\xi c =c\}<1$ for all $c\in\mr$, the a.s.\ divergence is justified by Theorem 2.1 in \cite{Goldie+Maller:2000}. Thus, some normalizations are indeed needed in our limit theorems.

The presence of the logarithm already shows that limit theorems for $X(b)$ and $Y(a)$ are of different nature. It will follow from our proofs that the functional limit theorems for $Y(a)$ are driven by heavy-traffic limit theorems for $Z(a)$ as $a\to 0+$. The sequence $(S_k-ak+\log \eta_{k+1})_{k\in\mn_0}$ is a globally perturbed random walk, see \cite{Iksanov:2016} for a survey. The asymptotics of its supremum depends heavily upon the interplay between the asymptotic growth of $(S_k-ak)_{k\in\mn_0}$ and that of $(\log\eta_j)_{j\in\mn}$. This fact leads to three different functional limit theorems stated in Theorems \ref{perturbed_weak0}, \ref{perturbed_weak} and \ref{main_perturbed_weak convergence}. We note in passing that the one-dimensional distributional convergence of $\sup_{k\geq 0}(S_k-ak)$ as $a\to 0+$, properly normalized, is well-understood for the random walks $(S_k)_{k\in\mn_0}$ attracted to a centered stable L\'{e}vy process, see \cite{Shneer+Wachtel:2011} and references therein. Also, we mention that the one-dimensional distributional convergence of $\sup_{k\geq 0}(\log \eta_{k+1}-ak)$, properly normalized, as $a\to 0$ was investigated in \cite{Daley+Hall:1984}, see, in particular, Theorem 7 therein.

Similarly, the law of the iterated logarithm for $\log Y(a)$ stated in Theorem \ref{thm:lil} is a consequence of the law of the iterated logarithm for $\max_{0\leq k\leq n}\,(S_k+\log \eta_{k+1})$, properly normalized, as $n\to\infty$ and a previously known deterministic continuity result recalled in Proposition \ref{asser:det}.

Under the aforementioned financial interpretation, the variable $Y(a)$ is a perpetuity with a discount factor for year $k$ being equal to $\eee^{\xi_k-a}$. It is natural to call $-(\me\xi-a)=a$ the average rate of exponential wealth growth in the economics. Thus, our limit theorems describe the fluctuations of the perpetuity, when the average rate of exponential wealth growth approaches $0$ while staying positive.

We are ready to formulate our main results.

\subsubsection{Weak convergence} According to Lemma \ref{lem:cont}, the processes
$$
\big(\sup_{k\geq 0}\,(S_k-auk+\log \eta_{k+1})\big)_{u>0}\quad\text{and}\quad\Big(\log \sum_{k\geq 0}\eee^{S_k-auk}\eta_{k+1}\Big)_{u>0}
$$
are a.s.\ continuous. This enables us to formulate functional limit theorems in $C$.

We start with simpler situations in which the asymptotic behavior of the discounted convergent perpetuity is driven by either fluctuations of $(S_k-ak)_{k\in\mn_0}$ (Theorem \ref{perturbed_weak0}) or $(\log \eta_j)_{j\in\mn}$ (Theorem \ref{perturbed_weak}).
\begin{thm}\label{perturbed_weak0}
Suppose that \eqref{10} holds and that
\begin{equation}\label{cond1}\lim_{t\to\infty}t^2\mmp\{\log \eta>t\}=0.
\end{equation}
Then
\begin{equation}\label{eq:supr}
\big(a\,\sup_{k\geq 0}\,(S_k-auk+\log \eta_{k+1})\big)_{u>0} \quad \Longrightarrow\quad \big(\sup_{s\geq 0}\,(\sigma B(s)-us)\big)_{u>0},\quad
a\to 0+
\end{equation}
and
\begin{equation}\label{limit_perturbed0}
\Big(a\log \sum_{k\geq 0}\eee^{S_k-auk}\eta_{k+1}\Big)_{u>0} \quad \Longrightarrow\quad \big(\sup_{s\geq 0}\,(\sigma B(s)-us)\big)_{u>0},\quad
a\to 0+
\end{equation}
on $C$, where $B$ is a standard Brownian motion.
\end{thm}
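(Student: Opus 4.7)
The plan is to establish the supremum convergence \eqref{eq:supr} first, via Donsker's invariance principle combined with the fact that \eqref{cond1} makes the perturbation $a\log\eta_{k+1}$ asymptotically negligible; the logarithm convergence \eqref{limit_perturbed0} then follows by a Laplace-principle sandwich comparing $\log Y(au)$ to its leading summand $M_a(u):=\sup_{k\geq 0}(S_k-auk+\log\eta_{k+1})$.

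For \eqref{eq:supr}, define $W_a(t):=aS_{\lfloor t/a^2\rfloor}$, so that Donsker's theorem gives $W_a\Longrightarrow \sigma B$ in $D[0,\infty)$ as $a\to 0+$. The substitution $s=a^2 k$ recasts the quantity of interest as
\[
a(S_k-auk+\log\eta_{k+1})=W_a(a^2 k)-u(a^2 k)+a\log\eta_{k+1}.
\]
The key estimate is that for every fixed $T,\varepsilon>0$,
\[
\mmp\Bigl\{a\max_{0\leq k\leq T/a^2}\log^+\eta_{k+1}>\varepsilon\Bigr\}\leq \bigl(T/a^2+1\bigr)\mmp\{\log\eta>\varepsilon/a\}\to 0,
\]
since $(\varepsilon/a)^2\mmp\{\log\eta>\varepsilon/a\}=o(1)$ by \eqref{cond1}; this is the unique place where \eqref{cond1} enters. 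Continuous mapping applied jointly in $u>0$ to the functional $f\mapsto(\sup_{s\in[0,T]}(f(s)-us))_{u>0}$ (continuous on $C[0,T]$) then yields, for each $T>0$,
\[
\Bigl(a\sup_{0\leq k\leq T/a^2}(S_k-auk+\log\eta_{k+1})\Bigr)_{u>0}\Longrightarrow \Bigl(\sup_{s\in[0,T]}(\sigma B(s)-us)\Bigr)_{u>0}.
\]
It remains to show that the tail over $k>T/a^2$ is negligible as $T\to\infty$, uniformly in small $a$. For any $\delta\in(0,1)$,
\[
\sup_{k>T/a^2}(S_k-auk+\log\eta_{k+1})\leq Z\bigl((1-\delta)au\bigr)-\delta uT/a,
\]
so after multiplying by $a$ it suffices to prove that $aZ((1-\delta)au)$ is tight as $a\to 0+$.

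For \eqref{limit_perturbed0}, the trivial bound $\log Y(au)\geq M_a(u)$ combined with \eqref{eq:supr} supplies one half. For the matching upper bound, fix $\varepsilon\in(0,1)$ and peel off a small geometric factor:
\[
Y(au)=\sum_{k\geq 0}e^{(S_k-(1-\varepsilon)auk+\log\eta_{k+1})}e^{-\varepsilon auk}\leq e^{M_a((1-\varepsilon)u)}\,\frac{1}{1-e^{-\varepsilon au}}.
\]
Taking $a\log$, the second term is $-a\log(1-e^{-\varepsilon au})=O(a\log(1/a))\to 0$, while \eqref{eq:supr} applied with slope $(1-\varepsilon)u$ gives $aM_a((1-\varepsilon)u)\Longrightarrow \sup_s(\sigma B(s)-(1-\varepsilon)us)$. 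Sending $\varepsilon\to 0+$, continuity of $\lambda\mapsto \sup_s(\sigma B(s)-\lambda s)$ closes the sandwich. Joint functional convergence on $C$ then follows from finite-dimensional convergence (running the argument at finitely many parameters) together with the monotonicity of $u\mapsto aM_a(u)$ and $u\mapsto a\log Y(au)$ and the continuity of the limit, via the classical Dini-type criterion for weak convergence of monotone processes.

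The main obstacle is the uniform-in-$a$ tail control of $\sup_{k>T/a^2}(S_k-auk+\log\eta_{k+1})$: the bound through $Z((1-\delta)au)$ is useful only if $aZ((1-\delta)au)$ is known to be tight, which is essentially the statement \eqref{eq:supr} itself at a slightly different slope. Breaking the apparent circularity — most naturally via a direct maximal inequality of Kolmogorov type for $(S_k)_{k\geq 0}$, coupled with the $o(t^{-2})$ tail of $\log\eta$ supplied by \eqref{cond1} applied to the window $k\asymp T/a^2$ — is the delicate technical step. Once this a priori tail bound is in place, the rest of the argument is routine continuous-mapping and sandwich manipulation.
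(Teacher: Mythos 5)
Your overall architecture is sound and parallels the paper's in broad strokes (Donsker plus negligibility of the $\log\eta$ perturbation on $[0,T/a^2]$, then a tail estimate, then a sandwich to pass from the supremum to the logarithm, then monotonicity-plus-continuity of the limit to upgrade finite-dimensional to functional convergence in $C$). Your sandwich for \eqref{limit_perturbed0} is in fact cleaner than the paper's: peeling off a geometric factor to get $Y(au)\leq e^{M_a((1-\varepsilon)u)}(1-e^{-\varepsilon au})^{-1}$, then sending $\varepsilon\to 0+$ using continuity of $\lambda\mapsto\sup_s(\sigma B(s)-\lambda s)$, avoids the paper's explicit truncation at $\lfloor Tc(a)\rfloor$ and the accompanying bookkeeping with the $\log^+$ inequalities \eqref{ineq2}--\eqref{ineq5000}. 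That part would be a genuine simplification.

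The genuine gap is exactly the one you flag yourself and then leave open: the uniform-in-$a$ control of $a\sup_{k>T/a^2}(S_k-auk+\log\eta_{k+1})$. Your proposed route via $Z((1-\delta)au)$ reduces it to tightness of $aZ((1-\delta)au)$, but you neither derive that tightness nor break it into its two tractable pieces. The paper sidesteps this issue entirely by a different mechanism: Proposition~\ref{cor:bm1} proves directly that $(a\sup_{k\geq\lfloor Ta^{-2}\rfloor}(S_k-auk+\log\eta_{k+1}))_{u>0}\overset{{\rm f.d.}}{\longrightarrow}(\sup_{s\geq T}(\sigma B(s)-us))_{u>0}$ via a Skorokhod coupling and the deterministic supremum-convergence Lemma~\ref{lem:all-time-sup} (applied pathwise through Corollary~\ref{cor1} and Remark~\ref{rem:cont}, with $\lim_{s\to\infty}(\sigma B(s)-us)=-\infty$ a.s.\ supplying the needed tail condition on the limit). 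Together with Lemma~\ref{lem:tail}, which says $\sup_{s\geq T}(\sigma B(s)-us)\to-\infty$ a.s., this yields exactly the uniform tail control you need without any maximal inequality. If you prefer to keep your direct route, the missing ingredient is not a Kolmogorov inequality for $S_k$ alone but rather (i) a Lorden/Kingman-type bound showing $\mathbb{E}\sup_{k}(S_k-\nu k)=O(\sigma^2/\nu)$ as $\nu\to 0+$, giving tightness of $a\sup_k(S_k-(1-\delta)auk/2)$ via Markov, and (ii) a separate estimate showing that $a\sup_k(\zeta_{k+1}-(1-\delta)auk/2)\overset{\mmp}{\longrightarrow}0$, which follows from the union bound $\sum_k\mmp\{\zeta>M/a+(1-\delta)uk/2\}\lesssim\frac{2}{(1-\delta)u}\int_{M/a}^\infty\mmp\{\zeta>t\}\,{\rm d}t=o(a)$ under \eqref{cond1}. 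Until one of these two resolutions is written out, the argument is incomplete at its acknowledged crux.
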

\begin{rem}
The limit process in Theorem \ref{perturbed_weak0} is the Legendre-Fenchel transform of $s\mapsto -\sigma B(s)\1_{[0,\,\infty)}(s)$, $s\in\mathbb{R}$ evaluated at $-u<0$.
In particular, it is a.s.\ convex (as a function of $u$), hence a.s.\ continuous. Similarly, the converging process in \eqref{eq:supr} can be thought of as a discrete version of the Legendre-Fenchel transform. These observations are implicitly used in the proof of Lemma \ref{lem:cont} below when showing the a.s.\ convexity of the processes involved.
\end{rem}

For positive $\gamma$ and $\rho$, let $N^{(\gamma,\,\rho)}:=\sum_k \varepsilon_{(t_k^{(\gamma,\,\rho)},\,j_k^{(\gamma,\,\rho)})}$ be a Poisson random
measure on $[0,\infty)\times (0,\infty]$ with intensity measure $\mathbb{LEB}\times \mu_{\gamma,\,\rho}$, where $\varepsilon_{(t,\,x)}$ is the probability measure
concentrated at $(t,x)\in [0,\infty)\times (0,\infty]$, $\mathbb{LEB}$ is the Lebesgue measure on $[0,\infty)$, and $\mu_{\gamma,\,\rho}$ is the measure on $(0,\infty]$ defined by
$$\mu_{\gamma,\,\rho}\big((x,\infty]\big)=\gamma x^{-\rho},\quad x>0.$$
\begin{thm}\label{perturbed_weak}
Suppose that \eqref{10} holds and that
the function $t\mapsto \mmp\{\log \eta>t\}$ is regularly varying at $\infty$ of index $-\beta$, $\beta\in (1, 2]$. If $\beta=2$, assume additionally that $\lim_{t\to\infty} t^2\mmp\{\log \eta>t\}=\infty$.
Let $b$ and $c$ be positive functions which satisfy $\lim_{t\to\infty} t\mmp\{\log \eta>b(t)\}=1$ and $b(c(a))\sim ac(a)$ as $a\to 0+$. Then
\begin{equation}\label{limit_perturbed1111}
\Big(\frac{1}{ac(a)}\,\sup_{k\geq 0}\,(S_k-auk+\log \eta_{k+1})\Big)_{u>0}\quad\Longrightarrow\quad
\big(\sup_k\,\big(-ut_k^{(1,\,\beta)}+j_k^{(1,\,\beta)}\big)\big)_{u>0},\quad
a\to 0+
\end{equation}
and
\begin{equation}\label{limit_perturbed1}
\Big(\frac{1}{ac(a)}\log \sum_{k\geq 0}\eee^{S_k-auk}\eta_{k+1}\Big)_{u>0}\quad\Longrightarrow\quad
\big(\sup_k\,\big(-ut_k^{(1,\,\beta)}+j_k^{(1,\,\beta)}\big)\big)_{u>0},\quad
a\to 0+
\end{equation}
on $C$.
\end{thm}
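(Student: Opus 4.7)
\medskip

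\noindent\textbf{Proof proposal.}
The plan is to first establish \eqref{limit_perturbed1111} for the suprema via point process convergence, and then deduce \eqref{limit_perturbed1} by sandwiching the log-perpetuity between the sup and the sup plus a correction of order $o(ac(a))$.

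For \eqref{limit_perturbed1111}, I would first drop $S_k$ and handle the simpler quantity $(ac(a))^{-1}\sup_{k\geq 0}(\log\eta_{k+1}-auk)$ by a point-process argument. Set
\[
\mathcal{N}_a:=\sum_{k\geq 0}\varepsilon_{(k/c(a),\,(\log\eta_{k+1})/(ac(a)))}.
\]
The regular variation of $t\mapsto\mmp\{\log\eta>t\}$ of index $-\beta$ together with $b(c(a))\sim ac(a)$ yields $c(a)\mmp\{\log\eta>xac(a)\}\to x^{-\beta}$ for every $x>0$, and a classical result on thinned/rescaled i.i.d.\ point processes then gives $\mathcal{N}_a\Longrightarrow N^{(1,\beta)}$ vaguely on $[0,\infty)\times(0,\infty]$. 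Continuous mapping applied to the functional $N\mapsto(u\mapsto\sup_{(t,x)\in\mathrm{supp}(N)}(x-ut))_{u>0}$---which is a.s.\ continuous at $N^{(1,\beta)}$ because the limit is a.s.\ a finite convex function of $u$ built from a Poisson configuration whose support accumulates only at $\infty$ in the time coordinate---produces the desired convergence in $C$. To reinstate $S_k/(ac(a))$, I would observe that $ac(a)$ is regularly varying at $0+$ of index $-1/(\beta-1)$ (using the additional tail assumption when $\beta=2$), hence $a\sqrt{c(a)}\to\infty$; Kolmogorov's maximal inequality then gives $\max_{k\leq Kc(a)}|S_k|/(ac(a))\to 0$ in probability for every fixed $K>0$. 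A tightness argument based on the a.s.\ drift $T_k(u)/k\to-au$, where $T_k(u):=S_k-auk+\log\eta_{k+1}$, shows that for $K$ large the sup in \eqref{limit_perturbed1111} is attained on $\{k\leq Kc(a)\}$ with probability arbitrarily close to $1$, uniformly in $u$ on compact subsets of $(0,\infty)$; Slutsky's lemma then closes the argument.

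For \eqref{limit_perturbed1}, set $M(a,u):=\sup_{k\geq 0}T_k(u)$. The inequality $M(a,u)\leq\log\sum_{k\geq 0}\eee^{T_k(u)}$ is trivial, and in view of \eqref{limit_perturbed1111} it suffices to prove
\[
\sup_{u_1\leq u\leq u_2}\log\sum_{k\geq 0}\eee^{T_k(u)-M(a,u)}=o_{\mmp}(ac(a)),\qquad a\to 0+,
\]
for every $0<u_1<u_2$. Split the sum at $k_0(a):=\lceil Kc(a)\rceil$: the first $k_0(a)+1$ terms each contribute at most $1$; for $k>k_0(a)$, the SLLNs $S_k/k\to 0$ (from ${\rm Var}\,\xi<\infty$) and $\log\eta_{k+1}/k\to 0$ a.s.\ (from $\me\log^+\eta<\infty$) give $T_k(u)\leq -au_1 k/2$ eventually, uniformly over $u\in[u_1,u_2]$, so summing the resulting geometric series and using the tightness of $M(a,u)/(ac(a))$ shows this tail is $o_{\mmp}(1)$ for $K$ sufficiently large. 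Thus with high probability the total sum is at most $k_0(a)+2$, and its logarithm is $O(\log(1/a))=o(ac(a))$ since $ac(a)$ grows polynomially in $1/a$ while $\log(1/a)$ grows logarithmically.

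The main technical obstacle will be making both the tightness argument (confining the sup to $k\leq Kc(a)$) and the tail estimate (controlling $\sum_{k>k_0(a)}\eee^{T_k(u)}$) uniform in $u$ on compact subsets of $(0,\infty)$, so that pointwise weak convergence can be upgraded to weak convergence in $C$. Here I would exploit the a.s.\ convexity in $u$ of both the prelimit processes $u\mapsto M(a,u)/(ac(a))$ and $u\mapsto(ac(a))^{-1}\log\sum_k\eee^{T_k(u)}$ and the limit process (each being a pointwise supremum, resp.\ log-sum-exp, of affine functions of $u$). Convexity supplies the equicontinuity needed to pass from pointwise/finite-dimensional convergence to locally uniform convergence, as already used implicitly in Lemma \ref{lem:cont}.
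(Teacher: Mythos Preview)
Your overall strategy---point process convergence for the $\eta$'s, negligibility of $S_k/(ac(a))$ on the scale $k\leq Kc(a)$, then a sandwich for the log-perpetuity, with convexity upgrading f.d.\ to $C$---matches the paper's. The Kolmogorov step and the use of $a^2c(a)\to\infty$ are correct, and convexity is indeed how the paper passes from f.d.\ to $C$.

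There is, however, a genuine gap in both of your tail arguments. You invoke the SLLN-type facts $S_k/k\to 0$ and $\log\eta_{k+1}/k\to 0$ a.s.\ to conclude that $T_k(u)\leq -au_1k/2$ for $k>Kc(a)$, and similarly you base the tightness of the sup on the ``a.s.\ drift $T_k(u)/k\to -au$''. But the drift $-au$ vanishes as $a\to 0+$: the random threshold $K_0(\omega,\varepsilon)$ beyond which $|S_k/k|+(\log\eta_{k+1})^+/k<\varepsilon$ goes to infinity as $\varepsilon\downarrow 0$, and nothing in the SLLN tells you it stays below $Kc(a)$ when you take $\varepsilon=au_1/4$. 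The same objection applies to the continuous-mapping step for the all-time sup functional, which is \emph{not} continuous in the vague topology on $M_p$ without a separate tail control; your claim of a.s.\ continuity at $N^{(1,\beta)}$ hides exactly this issue.

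The paper resolves this quantitatively rather than almost surely. For the truncated sup it uses a continuity lemma for the functional $(f,\nu)\mapsto\sup_{\theta_k\leq t}(f(\theta_k)+y_k)$ on $D\times M_p$ (Proposition~\ref{Iks}), and for the passage to the full sup it bounds
\[
\mmp\Big\{\sup_{k>\lfloor T_1c(a)\rfloor}(S_k-auk+\zeta_{k+1})>z\,ac(a)\Big\}
\]
by splitting off $S_k-auk/2$ (handled via Donsker and $a^2c(a)\to\infty$) and $\zeta_{k+1}-auk/2$ (handled by summing $\mmp\{\zeta>auk/2+\cdots\}$ and using the regular variation of the tail of $\zeta$ together with $b(c(a))\sim ac(a)$). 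These explicit estimates are what produce the required $\lim_{T_1\to\infty}\limsup_{a\to 0+}=0$; an almost-sure drift argument cannot replace them. Once the sup result is in hand, the paper's sandwich for the log-perpetuity is essentially yours, but its tail term is again controlled via the already-proved convergence of $\sup_{k>\lfloor Tc(a)\rfloor}(S_k-auk/2+\zeta_{k+1})$ rather than by SLLN.
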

\begin{rem}\label{rem1}
In the role of $b$ one can take an asymptotically inverse function of $t\mapsto 1/\mmp\{\log \eta>t\}$. By Theorem 1.5.12 in \cite{Bingham+Goldie+Teugels:1989}, such functions exist and are regularly varying at $\infty$ of index $1/\beta$. In the role of $t\mapsto c(1/t)$ one can take an asymptotically inverse function of $t\mapsto t/b(t)$. Another appeal to Theorem 1.5.12 in \cite{Bingham+Goldie+Teugels:1989} enables us to conclude that $t\mapsto c(1/t)$ is regularly varying at $\infty$ of index $\beta/(\beta-1)$. Hence, $a\mapsto c(a)$ is regularly varying at $0+$ of index $-\beta/(\beta-1)$. In particular, if $\mmp\{\log \eta>t\}\sim \kappa t^{-\beta}$ as $t\to\infty$ for some $\kappa>0$, then
$c(a)\sim \kappa^{1/(\beta-1)}a^{-\beta/(\beta-1)}$ as $a\to 0+$. For later needs, we note that
\begin{equation}\label{eq:a2c(a)}
\lim_{a\to 0+}a^2c(a)=\infty.
\end{equation}
This is obvious when $\beta\in (1,2)$ and follows from 
$$
a^2c(a)\sim a^2c^2(a)\mmp\{\log\eta>b(c(a))\}\sim (ac(a))^2\mmp\{\log\eta>ac(a)\}\to \infty ,\quad a\to 0+
$$
when $\beta=2$.

According to formula \eqref{38888}, for each $u>0$, $\sup_k\,\big(-ut_k^{(1,\,1)}+j_k^{(1,\,1)}\big)=+\infty$ a.s. This explains the fact that Theorem \ref{perturbed_weak} is not applicable in the situations in which $\me \log^+ \eta<\infty$ and $t\mapsto \mmp\{\log \eta>t\}$ is regularly varying at $\infty$ of index $-1$.
\end{rem}
\begin{rem}
Observe that, under the assumptions of Theorem \ref{perturbed_weak0}, both \eqref{eq:supr} and \eqref{limit_perturbed0} remain true on replacing $\eta$ with $1$ and that, under the assumptions of Theorem \ref{perturbed_weak}, both \eqref{limit_perturbed1111} and \eqref{limit_perturbed1} remain true on replacing $\xi$ with $0$. We think this (obvious) observation facilitates understanding of Theorems \ref{perturbed_weak0} and \ref{perturbed_weak}.
\end{rem}

If in addition to \eqref{10} the condition
\begin{equation}\label{11}
\mmp\{\log \eta>t\}~\sim~ \lambda t^{-2},\quad t\to\infty,
\end{equation}
holds for some $\lambda>0$, then contributions of $\max_{0\leq k\leq
n} S_k$ and $\max_{1\leq k\leq n+1}\log \eta_k$ to the asymptotic behavior of $\max_{0\leq k\leq
n}(S_k+\log \eta_{k+1})$ are comparable. This situation which
is more interesting than the other two is treated in Theorem
\ref{main_perturbed_weak convergence}.
\begin{thm}\label{main_perturbed_weak convergence}
Suppose that \eqref{10} and \eqref{11} hold. Then
\begin{equation}\label{limit_suprem}
\big(a \sup_{k\geq 0}\,(S_k-auk+\log \eta_{k+1})\big)_{u>0} \quad \Longrightarrow\quad
\big(\sup_k \big(\sigma B(t_k^{(\lambda,\,2)})-u t_k^{(\lambda,\,2)}+j_k^{(\lambda,\,2)}\big)\big)_{u>0},\quad a\to 0+
\end{equation}
and
\begin{equation}\label{limit_perturbed}
\Big(a\log \sum_{k\geq 0}\eee^{S_k-auk}\eta_{k+1}\Big)_{u>0} \quad \Longrightarrow\quad
\big(\sup_k \big(\sigma B(t_k^{(\lambda,\,2)})-u t_k^{(\lambda,\,2)}+j_k^{(\lambda,\,2)}\big)\big)_{u>0},\quad a\to 0+
\end{equation}
on $C$, where $B$ is a standard Brownian motion independent of $N^{(\lambda,\,2)}$.
\end{thm}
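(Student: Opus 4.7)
The plan is to prove \eqref{limit_suprem} first by combining a joint invariance principle with point-process convergence, then to deduce \eqref{limit_perturbed} from it via the deterministic comparison recalled in Proposition \ref{asser:det}. The underlying heuristic is the usual one for globally perturbed random walks: under \eqref{11}, both the Brownian fluctuations of $S_k$ and the sparse extreme values of $\log\eta_{k+1}$ contribute at the same scale $1/a$ to the supremum, producing the mixed limit involving $B$ evaluated along the jump times of $N^{(\lambda,\,2)}$.

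The main step is the joint weak convergence on $C[0,\infty)\times\mathcal{M}_p\bigl([0,\infty)\times(0,\infty]\bigr)$
\begin{equation*}
\Bigl(\bigl(aS_{[t/a^2]}\bigr)_{t\geq 0},\ \sum_{k\geq 0}\varepsilon_{(a^2k,\,a\log\eta_{k+1})}\Bigr)\ \Longrightarrow\ \bigl((\sigma B(t))_{t\geq 0},\,N^{(\lambda,\,2)}\bigr),\quad a\to 0+,
\end{equation*}
with $B$ \emph{independent} of $N^{(\lambda,\,2)}$. Marginally this is Donsker's theorem and, under \eqref{11}, a standard convergence of the point process of normalized heavy-tailed marks. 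The main obstacle is the asymptotic independence of the two components in spite of the within-pair dependence of $(\xi_k,\eta_k)$; I would handle it by truncating $\log\eta_k$ at level $M/a$ for $M>0$ fixed, observing that the indices with $\log\eta_k>M/a$ are $O_\mmp(1)$ in each window $[0,T/a^2]$ so that their contribution to $aS_{[\cdot/a^2]}$ is $O_\mmp(a)$, and then letting $M\to\infty$ after $a\to 0+$.

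Given the joint convergence, \eqref{limit_suprem} follows by continuous mapping. After the substitution $s=a^2k$ one has $a(S_k-auk+\log\eta_{k+1})=aS_{s/a^2}-us+a\log\eta_{k+1}$, so the process on the left-hand side of \eqref{limit_suprem} is the image of the prelimit pair under
$$
(w,\nu)\ \longmapsto\ \Big(\sup_k\bigl(w(t_k)-ut_k+j_k\bigr)\Big)_{u>0},\qquad \nu=\sum_k\varepsilon_{(t_k,\,j_k)}.
$$
For fixed $u>0$ this map is a.s.\ continuous at $(\sigma B,N^{(\lambda,\,2)})$: the supremum is realised up to an arbitrarily small error on the finitely many atoms with $t_k\leq T$ and $j_k\geq\delta$ (letting $T\to\infty$ and $\delta\downarrow 0$ at the end), because the drift $-ut_k$ eventually dominates both $\sigma B(t_k)=O(\sqrt{t_k})$ and the sparse extreme marks. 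Continuity in $u$ comes from Lemma \ref{lem:cont} and the convexity of a supremum of affine functions, so finite-dimensional convergence upgrades automatically to convergence in $C(0,\infty)$.

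Finally, I would pass from \eqref{limit_suprem} to \eqref{limit_perturbed} via Proposition \ref{asser:det}, which provides the deterministic path-wise comparison
$$
0\leq a\log\sum_{k\geq 0}\eee^{S_k-auk}\eta_{k+1}-a\sup_{k\geq 0}\bigl(S_k-auk+\log\eta_{k+1}\bigr)\ \longrightarrow\ 0,\quad a\to 0+,
$$
locally uniformly in $u>0$ along almost every sample path, the summability being guaranteed by \eqref{10} and \eqref{11}. Combined with \eqref{limit_suprem} and Slutsky's lemma this yields \eqref{limit_perturbed} on $C$; the log-sum process is again convex in $u$, so Lemma \ref{lem:cont} covers the required continuity.
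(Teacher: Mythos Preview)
Your route to \eqref{limit_suprem} is essentially the paper's: joint convergence on $D\times M_p$ followed by continuous mapping for the supremum functional, which the paper organises through Proposition~\ref{Iks}, Lemma~\ref{lem:all-time-sup} and Proposition~\ref{thm:ppp}. You leave the continuity step sketchy where the paper invests real effort (the passage from suprema over $[0,T]$ to the all-time supremum is delicate and handled via Lemma~\ref{lem:all-time-sup} and the tail estimate in Proposition~\ref{thm:ppp}), but the idea is sound.

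The genuine gap is your derivation of \eqref{limit_perturbed}. Proposition~\ref{asser:det} does \emph{not} say what you attribute to it. It is a Tauberian-type equivalence between $\limsup_{x\to\infty} x^{-1}\log\mu([0,\varphi(x)])$ and the corresponding $\limsup$ for the Laplace transform of $\mu$; in the paper it is used exclusively for the law of the iterated logarithm (Theorem~\ref{thm:lil}) and furnishes no pathwise comparison of the form
\[
a\log\sum_{k\geq 0}\eee^{S_k-auk}\eta_{k+1}-a\sup_{k\geq 0}\,(S_k-auk+\log\eta_{k+1})\ \longrightarrow\ 0.
\]
The summation runs over infinitely many $k$, so the elementary sandwich $\max\leq\log\sum\leq\log n+\max$ is unavailable, and even convergence in probability of this difference (which is all Slutsky would need) is not obvious without further work. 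The paper does not attempt your shortcut. Instead it proves \eqref{limit_perturbed} directly: truncate at $\lfloor Ta^{-2}\rfloor$ terms, use the finite sandwich together with $a\log(\lfloor Ta^{-2}\rfloor+1)\to 0$ and \eqref{eq:ppp_bm_claim11} to get f.d.\ convergence of the truncated log-sum, and then show that $a\log^+\sum_{k>\lfloor Ta^{-2}\rfloor}\eee^{S_k-auk}\eta_{k+1}$ is negligible via inequalities \eqref{ineq2}--\eqref{ineq5000}, relation \eqref{eq:ppp_bm_claim2} and Lemma~\ref{lem:tail}. Your proposal replaces this entire truncation/tail argument by an appeal to the wrong proposition; as written it does not establish \eqref{limit_perturbed}.
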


\subsubsection{A law of the iterated logarithm} Theorem \ref{thm:lil}(a) is a law of the iterated logarithm for $\log Y(a)$ which corresponds to the distributional convergence of Theorem \ref{perturbed_weak0}. Theorem \ref{thm:lil}(b) is a law of the iterated logarithm for closely related random variables $\log \int_0^\infty \eee^{B(s)-as}{\rm d}s$, where $a>0$ and $B$ is a standard Brownian motion. The variable $\int_0^\infty \eee^{B(s)-as}{\rm d}s$ which is known in the literature as an {\it exponential functional of Brownian motion} has been the object of intensive research in the recent past, see \cite{Yor:2001} for a collection of results in a book format. According to formula \eqref{eq:convtoexp} below, $a\log\int_0^\infty \eee^{B(s)-as}{\rm d}s$ converges in distribution as $a\to 0+$ to an exponentially distributed random variable. This serves an informal explanation of the fact that one factor of the normalization in Theorem \ref{thm:lil} is $\log\log 1/a$ rather than $(\log\log 1/a)^{1/2}$ which typically arises in the cases when the limit distribution is normal.
\begin{thm}\label{thm:lil}
(a) Put $f(x):=x^2/\log\log x$ for $x>\eee$. Suppose that \eqref{10} holds and that
\begin{equation}\label{eq:oneta}
\me f(\log^+\eta)<\infty.
\end{equation}
Then
\begin{equation}\label{eq:perpLIL}
{\lim\sup}_{a\to 0+}\frac{2a \log \sum_{k\geq 0}\eee^{S_k-ak}\eta_{k+1}}{\log\log (1/a)}=\sigma\quad {\rm a.s.}
\end{equation}

\noindent (b) Let $B$ be a standard Brownian motion. Then
\begin{equation}\label{eq:brm}
{\lim\sup}_{a\to 0+}\frac{2a \log\int_0^\infty \eee^{B(s)-as}{\rm d}s}{\log\log (1/a)}=1\quad {\rm a.s.}
\end{equation}
\end{thm}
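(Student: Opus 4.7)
The paper's own remark pinpoints the strategy: Theorem \ref{thm:lil}(a) should follow by combining a Hartman--Wintner-type law of the iterated logarithm for the partial-supremum sequence
\[
M_n:=\max_{0\leq k\leq n}(S_k+\log\eta_{k+1})
\]
with the deterministic continuity result of Proposition \ref{asser:det}, which converts strong asymptotics of a sequence into strong asymptotics of the associated discounted supremum as $a\to 0+$. I would therefore split (a) into three steps.

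Step 1 (discrete LIL for $M_n$): prove $\limsup_{n\to\infty}M_n/\sqrt{2n\log\log n}=\sigma$ a.s. The bound $M_n\le\max_{k\le n}S_k+\max_{1\le k\le n+1}\log^+\eta_k$ combined with the Hartman--Wintner LIL, which yields $\sigma$ under \eqref{10}, provides the upper estimate once one shows the second maximum is $o(\sqrt{2n\log\log n})$ a.s. This is exactly what \eqref{eq:oneta} delivers: a change of variables shows that $\sum_{k}\mmp\{\log^+\eta>\varepsilon\sqrt{2k\log\log k}\}<\infty$ for every $\varepsilon>0$ iff $\me f(\log^+\eta)<\infty$, and then Borel--Cantelli yields $\log\eta_{k+1}=o(\sqrt{2k\log\log k})$ a.s. The matching lower bound is immediate from $M_n\ge\max_{k\le n}S_k$.

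Step 2 (deterministic transfer): apply Proposition \ref{asser:det} pathwise to the sequence $y_k:=S_k+\log\eta_{k+1}$. A Legendre-transform calculation on the envelope $y_k\lesssim\sigma\sqrt{2k\log\log k}$ locates the optimising index at $k^\ast\asymp\log\log(1/a)/a^2$ and produces an optimal value consistent with the normalisation in \eqref{eq:perpLIL}, yielding the stated LIL for $Z(a)=\sup_{k\ge 0}(S_k-ak+\log\eta_{k+1})$. Step 3 (from supremum to log-sum): use the pathwise sandwich
\[
Z(a)\le\log Y(a)\le Z(a)+\log\sum_{k\ge 0}\eee^{S_k-ak+\log\eta_{k+1}-Z(a)},
\]
and observe that under our standing assumptions the correction term grows at most like $O(\log(1/a))$ a.s., hence is negligible against $\log\log(1/a)/a$.

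Part (b) runs along identical lines with Brownian motion replacing the random walk and no $\eta$'s present. The classical LIL for Brownian motion, $\limsup_{s\to\infty}B(s)/\sqrt{2s\log\log s}=1$ a.s., substitutes for Hartman--Wintner; the continuous-parameter version of Proposition \ref{asser:det} yields the LIL for $\sup_{s\ge 0}(B(s)-as)$; and a Laplace-type sandwich
\[
\sup_{s\ge 0}(B(s)-as)\le\log\int_0^\infty \eee^{B(s)-as}\,{\rm d}s\le \sup_{s\ge 0}(B(s)-as)+O(\log(1/a))
\]
transfers the result to the integral. The main obstacle, in both parts, is Step 3: controlling the gap between the logarithm of the sum (respectively the integral) and its partial supremum uniformly a.s.\ on the LIL scale $\log\log(1/a)/a$. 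This calls for a pathwise Laplace-type estimate rather than the distributional-limit arguments used earlier in the paper; the Hartman--Wintner and Brownian LIL inputs, together with Proposition \ref{asser:det} itself, are taken as standard.
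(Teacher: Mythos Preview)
Your Step~1 upper bound matches the paper, but the lower bound has a real gap: the inequality $M_n\ge\max_{k\le n}S_k$ is false in general, since $\log\eta_{k+1}$ may be arbitrarily negative (no moment condition is imposed on $(\log\eta)^-$). The paper handles this by fixing $\gamma$ with $\mmp\{\log\eta>\gamma\}>0$, introducing the first-passage time $\tau_n=\inf\{k\le n:S_k>(1-\delta)\sigma\sqrt{2n\log\log n}-\gamma\}$, and applying the conditional Borel--Cantelli lemma to the events $\{\tau_n\le n,\ \log\eta_{\tau_n+1}>\gamma\}$; this is where the possible dependence between $\xi_k$ and $\eta_k$ within a pair has to be accounted for.

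More importantly, you misread Proposition~\ref{asser:det}. It is a de~Bruijn--type Tauberian equivalence between $\limsup_x x^{-1}\log\mu([0,\varphi(x)])$ and $\limsup_\lambda\lambda^{-1}\log\int\eee^{-x/\psi(\lambda)}\mu({\rm d}x)$; it relates the $\log$ of a partial mass to the $\log$ of its Laplace transform and says nothing about $Z(a)$. The paper exploits this by doing the sandwich at the \emph{finite-$n$} level, where it is trivial:
\[
M_n\ \le\ \log\sum_{k=0}^{n}\eee^{S_k}\eta_{k+1}\ \le\ M_n+\log(n+1),
\]
so the LIL for $M_n$ immediately gives the LIL for the partial log-sum. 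Taking $\mu([0,t])=\sum_{k\le t}\eee^{S_k}\eta_{k+1}$, the Laplace transform $\int\eee^{-ax}\mu({\rm d}x)$ is exactly $Y(a)$, and Proposition~\ref{asser:det} then delivers \eqref{eq:perpLIL} directly. Your Step~3 is therefore unnecessary, and the ``main obstacle'' you flag---controlling $\log Y(a)-Z(a)$ pathwise on the LIL scale---never arises. (Incidentally, the naive splitting $\eee^{S_k-ak}=\eee^{-ak/2}\cdot\eee^{S_k-ak/2}$ only yields $\log Y(a)\le Z(a/2)+O(\log(1/a))$, which loses a factor~$2$ in the limiting constant, so your Step~3 would not be routine even if one insisted on it.) Part~(b) follows the same pattern with $\mu([0,t])=\int_0^t\eee^{B(s)}{\rm d}s$ and the classical LIL for $\sup_{s\le t}B(s)$.
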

\begin{rem}
There exist distributions of $\eta$ which satisfy \eqref{cond1} (the assumption of Theorem \ref{perturbed_weak0}) and do not satisfy \eqref{eq:oneta} (the assumption of Theorem \ref{thm:lil}). To exemplify, let $\mmp\{\log \eta>t\}\sim t^{-2}(\log t)^{-1}$ as $t\to\infty$. It will be explained in Remark \ref{rem2} that, under \eqref{10}, relation \eqref{eq:perpLIL} fails to hold for the aforementioned distributions of $\eta$.
\end{rem}
For a family of functions or a sequence $(x_t)$ denote by $C((x_t))$ the set of its limit points.
\begin{cor}\label{corr:lil}
Under the assumptions of Theorem \ref{thm:lil},
$$C\Big(\Big(\frac{2a \log \sum_{k\geq 0}\eee^{S_k-ak}\eta_{k+1}}{\log\log (1/a)}: a\in (0, 1/\eee)\Big)\Big)=[0,\sigma]\quad\text{{\rm a.s.}}$$ and
$$C\Big(\Big(\frac{2a \log\int_0^\infty \eee^{B(s)-as}{\rm d}s}{\log\log (1/a)}: a\in (0, 1/\eee)\Big)\Big)=[0,1]\quad\text{{\rm a.s.}}$$
\end{cor}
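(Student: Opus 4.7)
The plan is to combine Theorem \ref{thm:lil}, which supplies the upper endpoint of each cluster set, with two complementary ingredients: (i) $a$-continuity of the quantities under the limsup, which forces the cluster set to be an interval, and (ii) a matching $\liminf$ equal to $0$, which supplies the lower endpoint. I describe the perpetuity case; the Brownian motion case is parallel.

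First I would establish continuity. Since $Y(a):=\sum_{k\geq 0}\eee^{S_k-ak}\eta_{k+1}$ is monotone decreasing in $a$ with $Y(a)<\infty$ a.s.\ for every $a>0$, the series converges uniformly on $[a_0,\infty)$ for each $a_0>0$, so $a\mapsto Y(a)$ is continuous (in fact smooth, via the same argument applied to the differentiated series) on $(0,\infty)$. Combined with the observation in Section \ref{sec:newtype} that $Y(a)\to\infty$ as $a\to 0+$ a.s., this makes
$$
g(a):=\frac{2a\log Y(a)}{\log\log(1/a)}
$$
a.s.\ continuous on $(0,1/\eee)$ and nonnegative for all sufficiently small $a$; in particular $\liminf_{a\to 0+}g(a)\geq 0$ a.s.

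Next I would show $\liminf_{a\to 0+}g(a)\leq 0$ a.s. By Theorem \ref{perturbed_weak0} evaluated at $u=1$, $a\log Y(a)$ converges in distribution to the a.s.\ finite random variable $\sup_{s\geq 0}(\sigma B(s)-s)$, hence $g(a)\to 0$ in probability as $a\to 0+$. Picking any deterministic sequence $a_n\to 0+$ and extracting a subsequence $a_{n_k}$ along which $g(a_{n_k})\to 0$ a.s.\ yields $\liminf_{a\to 0+}g(a)\leq 0$ a.s.

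Combining the previous two steps with Theorem \ref{thm:lil}(a), a.s.\ the continuous function $g$ on $(0,1/\eee)$ satisfies $\limsup_{a\to 0+}g(a)=\sigma$ and $\liminf_{a\to 0+}g(a)=0$. For any $c\in(0,\sigma)$ one produces sequences $a_n,b_n\to 0+$ with $g(a_n)>c>g(b_n)$, and by the intermediate value theorem one finds intermediate points $c_n\to 0+$ with $g(c_n)=c$; together with the endpoints $0$ and $\sigma$, this shows the cluster set equals $[0,\sigma]$. The Brownian motion statement is handled identically, using smoothness of $a\mapsto\int_0^\infty\eee^{B(s)-as}{\rm d}s$ on $(0,\infty)$, Theorem \ref{thm:lil}(b) for the limsup, and the distributional convergence to an exponential law recalled just before Theorem \ref{thm:lil} (formula \eqref{eq:convtoexp}) to drive the liminf step. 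The only mildly delicate point is the passage from in-probability convergence to an a.s.\ limit point at $0$; the subsequence extraction above resolves it and no serious obstacle is expected elsewhere.
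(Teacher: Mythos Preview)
Your proposal is correct and follows essentially the same route as the paper: both combine Theorem~\ref{thm:lil} for the $\limsup$, the distributional convergence from \eqref{limit_perturbed0} (resp.\ \eqref{eq:convtoexp}) to force $\liminf_{a\to 0+}g(a)=0$ a.s., and then the intermediate value theorem via continuity in $a$. You are slightly more explicit than the paper about establishing continuity of $a\mapsto Y(a)$ and about the subsequence extraction from convergence in probability, but the argument is otherwise identical.
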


\section{Marginal limit distributions and continuity of the paths}\label{marginal}
This section commences with a short discussion of continuous-time counterparts of the discounted convergent perpetuities $Y(a)$. The advantage of the latter is availability of explicit formulae for their marginal distributions. This makes their analysis easier in comparison to that of $Y(a)$. The appearance below of the distributions of the suprema of certain L\'{e}vy processes with a drift in the role of limit distributions provides a hint towards what can be expected in the discrete setting.

In what follows, $\overset{{\rm d}}{=}$ and ${\overset{{\rm d}}\longrightarrow}$ denote equality of distributions and convergence in distribution, respectively. Let $\theta_{b,\,c}$ be a random variable having a gamma distribution with positive parameters $b$ and $c$, that is, $$\mmp\{\theta_{b,\,c} \in {\rm d}x\}=\frac{c^b x^{b-1}}{\Gamma(b)}\eee^{-cx}\1_{(0,\infty)}(x){\rm d}x,$$ where $\Gamma$ is the Euler gamma function. Note that $\theta_{1,\,c}$ is an exponentially distributed random variable of mean $1/c$.

Let $(B(s))_{s\geq 0}$ be a standard Brownian motion. A known result (Proposition 3 in \cite{Pollak+Siegmund:1986}, Proposition 4.4.4 (b) in \cite{Dufresne:1990}, Example 3.3 on p.~309 in \cite{Urbanik:1992}) states that, for each $a>0$,
\begin{equation}\label{Urban}
\int_0^\infty \eee^{B(s)-as}{\rm d}s~\overset{{\rm d}}{=}~ 2/\theta_{2a,\,1}.
\end{equation}
From this we infer
\begin{equation}\label{eq:convtoexp}
a\log\int_0^\infty \eee^{B(s)-as}{\rm d}s~{\overset{{\rm d}}\longrightarrow}~\theta_{1,\,2},\quad a\to 0+.
\end{equation}
The appearance of an exponential distribution may look mysterious, unless it is interpreted via the distributional equality $$\theta_{1,\,2} ~\overset{{\rm d}}{=}~ \sup_{s\geq 0}\,(B(s)-s),$$ which follows from Corollary 2 (ii) on p.~190 in \cite{Bertoin:1998}. More generally, let $X:=(X(s))_{s\geq 0}$ be a centered spectrally negative L\'{e}vy process. Then, by the same corollary in \cite{Bertoin:1998}, $\sup_{s\geq 0}\,(X(s)-s)~\overset{{\rm d}}{=}~\theta_{1,\,\tau}$, where $\tau>0$ is the largest solution to the equation $\eee^{-s}\me \eee^{sX(1)}=1$. If $X=B$ a Brownian motion, then the latter equation is equivalent to $s^2/2-s=0$, whence $\tau=2$. By the same reasoning, for each $u>0$ and each $w\in\mr$,
\begin{equation}\label{eq:expo}
\sup_{s\geq 0}\,(wB(s)-us)~\overset{{\rm d}}{=}~\theta_{1,\,2u/w^2}.
\end{equation}
Assume additionally that $X$ is an $\alpha$-stable L\'{e}vy process, $\alpha\in (1,2]$. Then arguing along the lines of the proof of Theorem \ref{perturbed_weak0} one can show that
$$a^{\alpha-1}\log\int_0^\infty \eee^{X(s)-as}{\rm d}s~{\overset{{\rm d}}\longrightarrow}~ \sup_{s\geq 0}\,(X(s)-s)~\overset{{\rm d}}{=}~\theta_{1,\,\tau},\quad a\to 0+.$$

According to \eqref{eq:expo}, the marginal limit distributions in Theorem \ref{perturbed_weak0} are exponential with means $\sigma^2/(2u)$. In Proposition \ref{prop:marginal} we identify the marginal limit distributions in Theorems \ref{perturbed_weak} and \ref{main_perturbed_weak convergence} and justify the claim made in Remark \ref{rem1}.
\begin{assertion}\label{prop:marginal}
Let $x,u,T>0$.

\noindent (a) For $\lambda>0$ and $\beta\in (1,2]$,
\begin{equation}\label{3889}
\mmp\Big\{\sup_{k:\, t_k^{(\lambda,\,\beta)}\leq T}\,\big(-u t_k^{(\lambda,\,\beta)}+j_k^{(\lambda,\,\beta)}\big)\leq x\Big\}=\exp(-u^{-1}(\beta-1)^{-1}\lambda (x^{1-\beta}-(x+uT)^{1-\beta}),
\end{equation}
\begin{equation}\label{388}
\mmp\Big\{\sup_k \big(-u t_k^{(\lambda,\,\beta)}+j_k^{(\lambda,\,\beta)}\big)\leq x\Big\}=\exp(-u^{-1}(\beta-1)^{-1}\lambda x^{1-\beta})
\end{equation}
and
\begin{equation}\label{38888}
\mmp\Big\{\sup_k \big(-u t_k^{(\lambda,\,1)}+j_k^{(\lambda,\,1)}\big)\leq x\Big\}=0.
\end{equation}
In particular, the random variables $\sup_{k:\, t_k^{(1,\,\beta)}\leq T}\,\big(-u t_k^{(\lambda,\,\beta)}+j_k^{(\lambda,\,\beta)}\big)$ and $\sup_k \big(-u t_k^{(\lambda,\,\beta)}+j_k^{(\lambda,\,\beta)}\big)$ are a.s.\ finite and positive.

\noindent (b) For $\lambda, \sigma>0$,
\begin{multline}\label{eq:reduc}
\mmp\Big\{\sup_{k:\,t_k^{(\lambda,\,2)}\leq T} \big(\sigma B(t_k^{(\lambda,\,2)})-u t_k^{(\lambda,\,2)}+j_k^{(\lambda,\,2)}\big)\leq x\Big\}\\=\me\exp\Big(-\lambda \int_0^T \frac{{\rm d}t}{(x-\sigma B(t)+ut)^2}\Big)\1_{\{\sup_{s\in [0,\,T]}\,(\sigma B(s)-us)<x\}}
\end{multline}
and
\begin{multline}
\mmp\Big\{\sup_k \big(\sigma B(t_k^{(\lambda,\,2)})-u t_k^{(\lambda,\,2)}+j_k^{(\lambda,\,2)}\big)\leq x\Big\}=\me\exp\Big(-\lambda \int_0^\infty \frac{{\rm d}t}{((x-\sigma B(t)+ut)^+)^2}\Big)\\=\me\exp\Big(-\lambda\int_0^\infty \frac{{\rm d}t}{(x-\sigma B(t)+ut)^2}\Big)\1_{\{\sup_{s\geq 0}\,(\sigma B(s)-us)<x\}}.\label{eq:mix}
\end{multline}
In particular, the random variables $\sup_{k:\,t_k^{(\lambda,\,2)}\leq T} \big(\sigma B(t_k^{(\lambda,\,2)})-u t_k^{(\lambda,\,2)}+j_k^{(\lambda,\,2)}\big)$ and $\sup_k \big(\sigma B(t_k^{(\lambda,\,2)})-u t_k^{(\lambda,\,2)}+j_k^{(\lambda,\,2)}\big)$ are a.s.\ finite and positive.
\end{assertion}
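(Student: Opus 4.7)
The engine of both parts is the classical identity $\mmp\{N(A) = 0\} = \exp(-\nu(A))$ for a Poisson random measure $N$ with intensity $\nu$ and measurable $A$. I would phrase every event in the statement as a void event for $N^{(\lambda,\,\beta)}$ or $N^{(\lambda,\,2)}$ over a suitable region of $[0,\infty)\times(0,\infty]$.

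For part (a), the event on the left-hand side of \eqref{3889} coincides with $\{N^{(\lambda,\,\beta)}(A_{x,T}) = 0\}$, where $A_{x,T} := \{(t,j) \in [0,T] \times (0,\infty] : j > x + ut\}$; its $\mathbb{LEB} \times \mu_{\lambda,\,\beta}$-measure equals $\lambda \int_0^T (x + ut)^{-\beta} \,{\rm d}t$. A direct evaluation gives $\lambda u^{-1}(\beta-1)^{-1}(x^{1-\beta} - (x+uT)^{1-\beta})$ for $\beta > 1$ and $+\infty$ for $\beta = 1$. This yields \eqref{3889} at once, while \eqref{388} and \eqref{38888} follow by sending $T \to \infty$. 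Almost sure positivity (resp.\ finiteness) of the limiting supremum then results from \eqref{388} by letting $x \to 0+$ (resp.\ $x \to \infty$).

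For part (b), I would condition on $B$ and use that $B$ and $N^{(\lambda,\,2)}$ are independent. Given $B$, the event on the left of \eqref{eq:reduc} is the void event for $A^B_{x,T} := \{(t,j) \in [0,T] \times (0,\infty] : j > x - \sigma B(t) + ut\}$, so the conditional probability equals $\exp(-\lambda \int_0^T ((x - \sigma B(t) + ut)^+)^{-2}\,{\rm d}t)$, with the convention $1/0 = +\infty$. On $\{\sup_{s \in [0,T]}(\sigma B(s) - us) < x\}$ the positive part coincides with $x - \sigma B(t) + ut$ throughout $[0,T]$; on the complementary event, the continuity of $B$ produces an open subinterval of $[0,T]$ on which the integrand equals $+\infty$, killing the exponential. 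Integrating over $B$ gives \eqref{eq:reduc}. Letting $T \to \infty$ by monotone convergence then delivers \eqref{eq:mix}, the tail of the integral being finite on $\{\sup_{s \ge 0}(\sigma B(s) - us) < x\}$ because $B(t) = o(t)$ a.s.; the equivalence of the two forms in \eqref{eq:mix} uses $\mmp\{\sup_{s \ge 0}(\sigma B(s) - us) = x\} = 0$, a consequence of the absolute continuity of the running maximum of a Brownian motion with drift. Finiteness and positivity of the limiting supremum then follow by letting $x \to \infty$ (dominated convergence inside the expectation with comparator $(1 + ut)^{-2}$) and $x \to 0+$ (using $\mmp\{\sup_{s \ge 0}(\sigma B(s) - us) > 0\} = 1$) respectively in \eqref{eq:mix}.

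I do not foresee a genuine obstacle. The only bookkeeping care required is to reconcile the convention ``$1/0 = +\infty$'' with the Poisson void-probability formula when the conditional mean measure is infinite (both sides then equal $0$), and to justify the two ``absolute continuity'' statements above. Beyond that the proof is a direct Poisson computation, with the conditioning on $B$ in part (b) being the sole substantive maneuver.
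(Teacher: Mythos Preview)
Your proposal is correct and follows essentially the same route as the paper: the void-probability identity for the Poisson random measure in part (a), and conditioning on $B$ combined with the same identity in part (b), followed by passage to the limit $T\to\infty$ and then $x\to 0+$ (resp.\ $x\to\infty$) for positivity (resp.\ finiteness). The only cosmetic differences are that the paper records the finite-$T$ formula for $\beta=1$ explicitly before sending $T\to\infty$, and invokes dominated rather than monotone convergence for the $T\to\infty$ step in part (b); your additional remarks on the $\mmp\{\sup=x\}=0$ point and on the finiteness as $x\to\infty$ fill in details the paper leaves implicit.
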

\begin{proof}
(a) We shall prove \eqref{3889} and
\begin{equation}\label{388889}
\mmp\Big\{\sup_{k:\, t_k^{(\lambda,\,1)}\leq T}\big(-u t_k^{(\lambda,\,1)}+j_k^{(\lambda,\,1)}\big)\leq x\Big\}=\Big(\frac{x}{uT+x}\Big)^{\lambda/u},\quad x>0.
\end{equation}
Sending $T\to\infty$ yields \eqref{388} and \eqref{38888}.

The probabilities on the left-hand sides of \eqref{3889} and \eqref{388889} are equal to
\begin{eqnarray*}
\mmp\big\{N^{(\lambda,\,\beta)}\big((t,y): t\leq T, -ut+y>x\big)=0\big\}=\exp\big(-\me N^{(\lambda,\,\beta)}\big((t,y): t\leq T,
-ut+y>x\big)\big)
\end{eqnarray*}
for $\beta\in (1,2]$ and $\beta=1$, respectively, because $N^{(\lambda,\,\beta)}\big((t,y): t\leq T, -ut+y>x\big)$ is a Poisson distributed random variable. Since
\begin{eqnarray*}
\me N^{(\lambda,\,\beta)}\big((t,y): t\leq T, -ut+y>x\big)&=&\int_0^T \int_{[0,\,\infty)}\1_{\{y>ut+x\}}\mu_{\lambda,\,\beta}({\rm d}y){\rm
d}t=\lambda \int_0^T(ut+x)^{-\beta}{\rm d}t\\&=&
\begin{cases}
        u^{-1}(\beta-1)^{-1}\lambda(x^{1-\beta}-(x+uT)^{1-\beta}), &   \text{if} \ \beta\in (1,2],   \\
        u^{-1}\lambda(\log(uT+x)-\log x), & \text{if} \ \beta=1,
\end{cases}
\end{eqnarray*}
\eqref{3889} and \eqref{388889} follow. Letting in \eqref{3889} and \eqref{388} $x\to 0+$ justifies the claims about the a.s.\ positivity.

\noindent (b) Conditioning on $B$ and arguing as in the proof of part (a) we arrive at
$$\mmp\Big\{\sup_{k:\,t_k^{(\lambda,\,2)}\leq T} \big(\sigma B(t_k^{(\lambda,\,2)})-u t_k^{(\lambda,\,2)}+j_k^{(\lambda,\,2)}\big)\leq x\Big\}=\me\exp\Big(-\lambda\int_0^T \frac{{\rm d}t}{((x-\sigma B(t)+ut)^+)^2}\Big).$$ In the case $u=0$ this formula can also be found in Proposition 1 of \cite{Wang:2014}, along with an equivalent representation of the right-hand side. Formula \eqref{eq:reduc} is its analogue in the case $u>0$.

By the strong law of large numbers for a Brownian motion, the integrand in \eqref{eq:reduc} behaves as $(ut)^{-2}$ as $t\to\infty$. Hence, it is integrable on $[0,\infty)$. Sending in \eqref{eq:reduc} $T\to\infty$ and invoking the Lebesgue dominated convergence theorem proves \eqref{eq:mix}. In view of \eqref{eq:expo}, the random variable $\sup_{s\geq 0}\,(\sigma B(s)-us)$ is a.s.\ positive. Hence, letting in the second part of \eqref{eq:mix} $x\to 0+$ and appealing once again to the Lebesgue dominated convergence theorem we conclude that the variable $\sup_k \big(\sigma B(t_k^{(\lambda,\,2)})-u t_k^{(\lambda,\,2)}+j_k^{(\lambda,\,2)}\big)$ is a.s.\ positive. The a.s.\ positivity of $\sup_{k:\,t_k^{(\lambda,\,2)}\leq T} \big(\sigma B(t_k^{(\lambda,\,2)})-u t_k^{(\lambda,\,2)}+j_k^{(\lambda,\,2)}\big)$  follows analogously.
\end{proof}

Also, we state and prove a lemma, which justifies the usage of space $C$ in our distributional limit theorems.
\begin{lemma}\label{lem:cont}
The processes $\big(\sup_{k\geq 0}\,(S_k-auk+\log \eta_{k+1})\big)_{u>0}$, $\Big(\log \sum_{k\geq 0}\eee^{S_k-auk}\eta_{k+1}\Big)_{u>0}$ and the limit processes in Theorems \ref{perturbed_weak0}, \ref{perturbed_weak} and \ref{main_perturbed_weak convergence} are a.s.\ convex, hence a.s.\ continuous.
\end{lemma}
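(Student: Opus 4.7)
The plan is to observe that each of the five processes is, pathwise, a pointwise supremum (or a log-sum-exp) of functions that are \emph{affine} in $u$, hence convex in $u$ on $(0,\infty)$ with values in $(-\infty,+\infty]$; since each will also be shown to be a.s.\ real-valued, continuity on the open interval $(0,\infty)$ follows from the standard fact that a convex real-valued function on an open interval is continuous. The only subtle point is promoting ``a.s.\ finite at each fixed $u$'' to ``a.s.\ finite simultaneously at every $u>0$'', and this is exactly where convexity plays an essential role.

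For $Z_u:=\sup_{k\geq 0}\,(S_k-auk+\log\eta_{k+1})$ and for the three limit processes in Theorems \ref{perturbed_weak0}, \ref{perturbed_weak} and \ref{main_perturbed_weak convergence}, the quantity inside the supremum is, for each fixed index, an affine function of $u$ (the coefficient being random but independent of $u$); the supremum of a family of affine functions is convex on $(0,\infty)$. Moreover, each supremum is trivially bounded below pathwise by any single term: for instance $Z_u\geq\log\eta_1>-\infty$ by taking $k=0$, and $\sup_{s\geq 0}(\sigma B(s)-us)\geq 0$ by taking $s=0$. For $L_u:=\log\sum_{k\geq 0}\eee^{S_k-auk}\eta_{k+1}$, write the sum as $\sum_{k\geq 0}\eee^{g_k(u)}$ with $g_k(u):=S_k-auk+\log\eta_{k+1}$. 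H\"{o}lder's inequality applied with exponents $1/\lambda$ and $1/(1-\lambda)$ for $\lambda\in(0,1)$ gives
\begin{equation*}
\sum_{k\geq 0}\eee^{\lambda g_k(u)+(1-\lambda)g_k(v)}\leq\Big(\sum_{k\geq 0}\eee^{g_k(u)}\Big)^{\lambda}\Big(\sum_{k\geq 0}\eee^{g_k(v)}\Big)^{1-\lambda},\qquad u,v>0,
\end{equation*}
and taking logarithms yields $L_{\lambda u+(1-\lambda)v}\leq\lambda L_u+(1-\lambda)L_v$ on the event where the sums are finite; the pathwise bound $L_u\geq\log\eta_1$ secures $L_u>-\infty$.

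It remains to upgrade the resulting pointwise a.s.\ finiteness to a simultaneous statement. For $Z_u$ and $L_u$ at a fixed $u>0$, a.s.\ finiteness comes from Theorem 2.1 in \cite{Goldie+Maller:2000} applied to $Y(au)$; for the three limit processes it comes from Proposition \ref{prop:marginal} and \eqref{eq:expo}. Choose a countable dense set $Q\subset(0,\infty)$: the countable union of the corresponding null sets is again null, so on a single event of full probability each process is finite at every $u\in Q$. Given any $u\in(0,\infty)$, pick $u_1,u_2\in Q$ with $u_1<u<u_2$ and write $u=\lambda u_1+(1-\lambda)u_2$ with $\lambda\in(0,1)$; convexity gives $f(u)\leq\lambda f(u_1)+(1-\lambda)f(u_2)<\infty$, while the pathwise lower bounds above supply $f(u)>-\infty$. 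Hence each process is a.s.\ real-valued and convex on $(0,\infty)$, and therefore a.s.\ continuous.

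The main (and essentially only) obstacle is this bookkeeping for the a.s.\ finiteness upgrade; the convexity itself is immediate from the affine structure in $u$, together with H\"{o}lder's inequality for the log-sum-exp case.
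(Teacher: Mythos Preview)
Your proof is correct and follows essentially the same route as the paper: convexity of the supremum processes via the affine structure in $u$ (the paper phrases this as subadditivity of the supremum, which is the same computation), and log-convexity of the perpetuity via H\"{o}lder's inequality (the paper packages this as log-convexity of a Laplace--Stieltjes transform). The one place you are more careful is the upgrade from ``a.s.\ finite at each fixed $u$'' to ``a.s.\ finite for all $u>0$ simultaneously'': the paper simply records pointwise finiteness and moves on, whereas you close this gap explicitly with the countable-dense-set/convexity argument (one could alternatively use that the processes are nonincreasing in $u$, which makes the upgrade even simpler).
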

\begin{proof}
Recall that, according to the discussion at the beginning of Section \ref{sec:newtype}, the first two processes (the converging processes in our distributional limit theorems) are a.s.\ finite for each $u>0$. The a.s.\ finiteness of the limit processes, for each $u>0$, follows from \eqref{eq:expo}, \eqref{388} and \eqref{eq:mix}, respectively.

Further, write, for any $\lambda_1, \lambda_2\geq 0$ satisfying $\lambda_1+\lambda_2=1$ and any $u_1, u_2>0$,
\begin{multline*}
\sup_{k\geq 0}\,(S_k-a(\lambda_1u_1+\lambda_2u_2)k+\log \eta_{k+1})=\sup_{k\geq 0}\,(\lambda_1 (S_k-au_1k+\log \eta_{k+1})+\lambda_2(S_k-au_2k+\log \eta_{k+1}))\\\leq \sup_{k\geq 0}\,(\lambda_1 (S_k-au_1k+\log \eta_{k+1}))+ \sup_{k\geq 0}\,(\lambda_2(S_k-au_2k+\log \eta_{k+1}))\\=\lambda_1 \sup_{k\geq 0}\,(S_k-au_1k+\log \eta_{k+1}) +\lambda_2 \sup_{k\geq 0}\,(S_k-au_2 k+\log \eta_{k+1})
\end{multline*}
having utilized subadditivity of the supremum for the inequality. This proves the claim for the first process. The proofs for the limit processes are analogous.

For each $a>0$, the function $u\mapsto \sum_{k\geq 0}\eee^{S_k-auk}\eta_{k+1}$ is the Laplace-Stieltjes transform of an infinite random measure $\mu_a$ defined by $\mu_a(\{ak\}):=\eee^{S_k}\eta_{k+1}$ for $k\in\mn_0$. It is a standard fact, which is secured by H\"{o}lder's inequality, that any Laplace-Stieltjes transform $f$, say is log-convex, that is, $\log f$ is convex.
\end{proof}

We close this section with another auxiliary result.
\begin{lemma}\label{lem:tail}
Let $\beta\in (1,2]$ and $\lambda>0$. With probability one, for each $u>0$,
\begin{equation}\label{eq:1}
\lim_{T\to\infty}\,\sup_{s\geq T}\,(\sigma B(s)-us)=-\infty,
\end{equation}
$$\lim_{T\to\infty} \sup_{k:\, t_k^{(1,\,\beta)}\geq T}\,\big(-u t_k^{(1,\,\beta)}+j_k^{(1,\,\beta)}\big)=-\infty$$ and
\begin{equation}\label{eq:3}
\lim_{T\to\infty} \sup_{k:\,t_k^{(\lambda,\,2)}\geq T} \big(\sigma B(t_k^{(\lambda,\,2)})-u t_k^{(\lambda,\,2)}+j_k^{(\lambda,\,2)}\big)=-\infty.
\end{equation}
\end{lemma}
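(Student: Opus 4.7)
The strategy is to handle each of the three limits separately, but to rely on a common observation first: each of the three suprema is non-increasing in $u$ (enlarging $u$ only decreases $-us$ or $-ut_k$). Consequently, it suffices to establish the limits almost surely for the countable set $\{1/n:n\ge 1\}$; on the intersection of the resulting full-measure events, monotone sandwiching yields the limits for every $u>0$ simultaneously. This disposes of the ``with probability one, for each $u>0$'' quantifier ordering cleanly.

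For \eqref{eq:1}, I would invoke the strong law of large numbers for Brownian motion, $B(s)/s\to 0$ a.s., as a single almost-sure event. On this event, for any fixed $u>0$ one can find $s_0$ with $|\sigma B(s)|\le us/2$ for all $s\ge s_0$, whence $\sup_{s\ge T}(\sigma B(s)-us)\le -uT/2\to -\infty$.

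The main content is the Poisson-measure argument for the second limit. Fix $u>0$ and $M>0$. Because the points $(t_k^{(1,\beta)}, j_k^{(1,\beta)})$ form a Poisson random measure with intensity ${\rm d}t\otimes \mu_{1,\beta}$, the count of those with $t_k^{(1,\beta)}\ge T$ and $-ut_k^{(1,\beta)}+j_k^{(1,\beta)}>-M$ is Poisson distributed with mean
\[
\int_T^\infty \mu_{1,\beta}\big((ut-M,\infty]\big)\,{\rm d}t=\int_T^\infty (ut-M)^{-\beta}\,{\rm d}t=\frac{(uT-M)^{1-\beta}}{u(\beta-1)}
\]
for $T>M/u$; since $\beta>1$, this mean tends to $0$, so the probability that at least one such point exists tends to $0$. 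As $T\mapsto \sup_{k:\,t_k^{(1,\beta)}\ge T}(-ut_k^{(1,\beta)}+j_k^{(1,\beta)})$ is monotone non-increasing, its a.s.\ limit $L(u)$ exists in $[-\infty,\infty]$, and $\mmp\{L(u)>-M\}=0$ for every $M>0$ forces $L(u)=-\infty$ a.s.

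For \eqref{eq:3}, I would combine the previous two steps. On the SLLN event, given $u$ pick $T_0=T_0(u)$ with $|\sigma B(t)|\le ut/2$ for $t\ge T_0$; then $\sigma B(t_k^{(\lambda,2)})-u t_k^{(\lambda,2)}+j_k^{(\lambda,2)}\le -(u/2)t_k^{(\lambda,2)}+j_k^{(\lambda,2)}$ for $t_k^{(\lambda,2)}\ge T_0$, reducing the claim to the Poisson argument of the previous paragraph, run with intensity $\lambda$ in place of $1$, exponent $\beta=2$, and drift $u/2$ in place of $u$. I do not foresee a substantive obstacle here; the only place requiring a little care is the monotonicity-in-$u$ reduction, which is what lets the three separate pointwise-in-$u$ statements be glued into a single almost-sure event good for all $u>0$.
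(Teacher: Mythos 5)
Your proof is correct and takes essentially the same route as the paper: SLLN for Brownian motion for \eqref{eq:1}, a Poisson first-moment (counting) computation for the second limit, and a combination of the two for \eqref{eq:3}. The only structural variation is in \eqref{eq:3}: the paper splits the supremum by subadditivity into $\sup_{s\geq T}(\sigma B(s)-us/2)+\sup_{k:\,t_k^{(\lambda,2)}\geq T}(-ut_k^{(\lambda,2)}/2+j_k^{(\lambda,2)})$, sending the first term to $-\infty$ via \eqref{eq:1} and observing that the second stays finite by \eqref{388}, whereas you absorb the Brownian part pointwise on the SLLN event and reduce directly to the Poisson argument with halved drift; both are valid, and your explicit monotonicity-in-$u$ reduction to the countable set $\{1/n\}$ is a welcome clarification of the ``with probability one, for each $u>0$'' quantifier order that the paper leaves implicit.
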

\begin{proof}
Relation \eqref{eq:1} follows from $$\lim_{T\to\infty}\,\sup_{s\geq T}\,(\sigma B(s)-us)={\lim\sup}_{T\to\infty}\,(\sigma B(T)-uT)=-\infty\quad \text{a.s.},$$ where the last equality is ensured by the strong law of large numbers for a Brownian motion.

Arguing as in the proof of Proposition \ref{prop:marginal} we conclude that $$\mmp\Big\{\sup_{k:\, t_k^{(1,\,\beta)}\geq T}\,\big(-u t_k^{(1,\,\beta)}+j_k^{(1,\,\beta)}\big)\leq x\Big\}=
\begin{cases}
        0, &   \text{for} \ x\leq -uT,   \\
        \exp\big(-(u(\beta-1))^{-1}(uT+x)^{1-\beta}\big), & \text{for} \ x>-uT.
\end{cases}$$
Letting $T\to\infty$ we infer $\lim_{T\to\infty} \sup_{k:\, t_k^{(1,\,\beta)}\geq T}\,\big(-u t_k^{(1,\,\beta)}+j_k^{(1,\,\beta)}\big)=-\infty$ in probability and, by monotonicity, a.s.

Using subadditivity of the supremum yields $$\sup_{k:\,t_k^{(\lambda,\,2)}\geq T} \big(\sigma B(t_k^{(\lambda,\,2)})-u t_k^{(\lambda,\,2)}+j_k^{(\lambda,\,2)}\big)\leq \sup_{s\geq T}\,(\sigma B(s)-us/2)+\sup_{k:\,t_k^{(\lambda,\,2)}\geq T} \big(-u t_k^{(\lambda,\,2)}/2+j_k^{(\lambda,\,2)}\big).$$ According to formula \eqref{388} with $\beta=2$, $\sup_k\, \big(-u t_k^{(\lambda,\,2)}/2+j_k^{(\lambda,\,2)}\big)$ is a.s.\ finite, whence
$$
\lim_{T\to\infty}\,\sup_{k:\,t_k^{(\lambda,\,2)}\geq T} \big(-u t_k^{(\lambda,\,2)}/2+j_k^{(\lambda,\,2)}\big)<\infty\quad \text{a.s.}
$$
This in combination with \eqref{eq:1} proves \eqref{eq:3}.
\end{proof}

\section{Auxiliary results}

Denote by $D$ the Skorokhod space of c\`{a}dl\`{a}g functions defined on $[0,\infty)$. We assume that the space $D$ is endowed with the $J_1$-topology
\begin{lemma}\label{lem:all-time-sup}
For $n\in\mn_0$, let $f_n\in D$ and $\lim_{n\to\infty}f_n=f_0$ on $D$.
Assume that
$$
M_0:=\sup_{t\geq 0}f_0(t)<\infty
$$
and
\begin{equation}\label{eq:double_limsup_condition}
\limsup_{t\to\infty}\limsup_{n\to\infty}f_n(t)<M_0.
\end{equation}
Then
$$
\lim_{n\to\infty}\sup_{t\geq 0}f_n(t)=M_0.
$$
\end{lemma}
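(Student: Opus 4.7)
The plan is to bound $\liminf_{n\to\infty}\sup_{t\geq 0}f_n(t)$ and $\limsup_{n\to\infty}\sup_{t\geq 0}f_n(t)$ separately, using that $J_1$-convergence in $D[0,\infty)$ restricts to $J_1$-convergence in $D[0,T]$ for every continuity point $T$ of $f_0$, and that the supremum functional $g\mapsto \sup_{s\in[0,T]} g(s)$ is continuous on $(D[0,T],J_1)$.

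For the lower bound, I would fix $\varepsilon>0$, choose $t_\varepsilon$ with $f_0(t_\varepsilon)>M_0-\varepsilon$, and pick a continuity point $T>t_\varepsilon$ of $f_0$. Then, by continuity of the supremum, $\sup_{s\in[0,T]}f_n(s)\to \sup_{s\in[0,T]}f_0(s)\geq f_0(t_\varepsilon)>M_0-\varepsilon$, so that $\liminf_n \sup_{t\geq 0} f_n(t)\geq M_0-\varepsilon$. Letting $\varepsilon\downarrow 0$ closes this direction.

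For the upper bound, I would set $L:=\limsup_{t\to\infty}\limsup_{n\to\infty} f_n(t)$ and fix $\delta>0$ with $L+2\delta<M_0$. Using \eqref{eq:double_limsup_condition}, I would secure a continuity point $T$ of $f_0$ and an index $N$ such that
$$
f_n(t)\leq L+\delta\quad\text{for all } n\geq N \text{ and all } t\geq T.
$$
Given this, the decomposition $\sup_{t\geq 0} f_n(t)=\max\bigl(\sup_{s\in[0,T]} f_n(s),\ \sup_{s\geq T} f_n(s)\bigr)\leq \max\bigl(\sup_{[0,T]} f_n,\ L+\delta\bigr)$, combined with $\sup_{[0,T]} f_n\to \sup_{[0,T]} f_0\leq M_0$, yields $\limsup_n \sup_{t\geq 0} f_n(t)\leq \max(M_0,L+\delta)=M_0$.

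The main obstacle is precisely the extraction of the displayed joint tail bound from \eqref{eq:double_limsup_condition}. In its literal iterated form, that condition supplies only $\limsup_n f_n(t)\leq L+\delta$ pointwise in $t\geq T^*$, and upgrading pointwise control in $t$ to uniformity in $t\geq T^*$ for all $n$ past a single $N$ is the delicate step. I would attempt this by contradiction: were no such $N,T$ to exist, I could extract $n_k\to\infty$ and $t_k\to\infty$ with $f_{n_k}(t_k)>L+\delta$, and then rule out cluster points of $(t_k)$ in $[0,\infty)$ via $J_1$-convergence (together with $M_0<\infty$, which controls limits of the form $\lim_k f_{n_k}(t_k)$ at finite cluster points by $\max(f_0(t^\ast-),f_0(t^\ast))\leq M_0$), pushing $t_k\to\infty$ and thus contradicting the hypothesis. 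In the applications in Theorems~\ref{perturbed_weak0}--\ref{main_perturbed_weak convergence}, where the tail behaviour of $f_n$ is governed by $\sigma B(s)-us$ plus a Poissonian maximum, this subsequence argument should go through without further assumption.
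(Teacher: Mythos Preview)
Your decomposition into lower and upper bounds, together with continuity of the supremum functional on $(D[0,T],J_1)$, mirrors the paper's argument. You are also right that everything hinges on upgrading the iterated condition \eqref{eq:double_limsup_condition} to a \emph{joint} tail bound $f_n(t)\le M_0-\varepsilon$ for all $t\ge T$ and all $n\ge N$; the paper's own proof simply asserts this bound in its opening sentence, with no justification.

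The obstacle you flag is not merely delicate: under the stated hypotheses the upgrade is impossible, and the lemma is false as written. Take $f_0(t)=1-\min(t,2)$ (continuous, $M_0=1$) and $f_n(t)=f_0(t)+3\cdot\1_{[n,\,n+1)}(t)$. Then $f_n|_{[0,T]}=f_0|_{[0,T]}$ once $n>T$, so $f_n\to f_0$ in $D$; for each fixed $t$ one has $\limsup_n f_n(t)=f_0(t)$, whence $\limsup_{t\to\infty}\limsup_{n\to\infty} f_n(t)=-1<1$; yet $\sup_{t\ge 0} f_n(t)=2$ for all $n\ge 2$. Your contradiction sketch cannot close the gap either: the finite-cluster-point case can indeed be excluded when $f_0$ is continuous (locally uniform convergence forces $f_{n_k}(t_k)\to f_0(t^\ast)\le L$), but once $t_k\to\infty$, the existence of $n_k,t_k\to\infty$ with $f_{n_k}(t_k)>L+\delta$ does \emph{not} conflict with \eqref{eq:double_limsup_condition}, since the iterated $\limsup$ ignores diagonal sequences --- exactly what the counterexample exploits. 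What is actually required, and what the direct tail estimates in the proof of Proposition~\ref{thm:ppp} effectively supply, is the stronger hypothesis $\limsup_{T\to\infty}\limsup_{n\to\infty}\sup_{t\ge T}f_n(t)<M_0$; from this the joint bound is immediate and your upper-bound argument goes through verbatim.
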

\begin{proof}
By~\eqref{eq:double_limsup_condition}, given sufficiently small $\varepsilon>0$, there exist $T(\varepsilon)\geq 0$ and $n_0(\varepsilon)\in\mn$ such that
$$
f_n(t)\leq M_0-\varepsilon,\quad t\geq T(\varepsilon),\quad n\geq n_0(\varepsilon).
$$
By the definition of supremum, there exists $t_0(\varepsilon)\geq 0$ such that
$$
M_0-\varepsilon/2\leq f_0(t_0(\varepsilon))\leq M_0.
$$
In view of the assumption $\lim_{n\to\infty} f_n=f_0$, there exists a sequence $(t_n(\varepsilon))_{n\in\mn}$ such that
$$
\lim_{n\to\infty}t_n(\varepsilon)=t_0(\varepsilon)\quad\text{and}\quad\lim_{n\to\infty} f_n(t_n(\varepsilon))=f_0(t_0(\varepsilon)).
$$
Thus, there exists $n_1(\varepsilon)\in\mn$ such that, for $n\geq n_1(\varepsilon)$,
$$
t_n(\varepsilon)\leq t_0(\varepsilon)+\varepsilon\quad\text{and}\quad f_n(t_n(\varepsilon))\geq f_0(t_0(\varepsilon))-\varepsilon/2\geq  M_0-\varepsilon.
$$
Put $a(\varepsilon):=\max(T(\varepsilon),t_0(\varepsilon)+\varepsilon)$. Combining the fragments together we conclude that, for $n\geq \max(n_0(\varepsilon),n_1(\varepsilon))$,
$$
\sup_{t\geq 0} f_n(t)=\sup_{t\in [0,\,a(\varepsilon)]} f_n(t)
$$
and thereupon
$$
\lim_{n\to\infty}\sup_{t\geq 0}f_n(t)=\lim_{n\to\infty}\sup_{t\in [0,\,a(\varepsilon)]}f_n(t)=\sup_{t\in [0,\,a(\varepsilon)]}f_0(t)\in [M_0-\varepsilon,M_0].
$$
Sending $\varepsilon\to 0+$ completes the proof.
\end{proof}
\begin{rem}\label{rem:cont}
If $f_0$ is continuous, then~\eqref{eq:double_limsup_condition} boils down to
\begin{equation}\label{eq:double_limsup_condition2}
\limsup_{t\to\infty}f_0(t)<M_0.
\end{equation}
\end{rem}

\begin{cor}\label{cor1}
Under the assumption of Lemma~\ref{lem:all-time-sup}, for each $T>0$,
$$
\lim_{n\to\infty}\sup_{t\geq T}f_n(t)=\sup_{t\geq T}f_0(t).
$$
\end{cor}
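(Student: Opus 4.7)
My plan is to derive Corollary~\ref{cor1} by invoking Lemma~\ref{lem:all-time-sup} for the shifted sequence $g_n(s):=f_n(s+T)$, $s\ge 0$. The conclusion of the corollary then becomes $\lim_n\sup_{s\ge 0}g_n(s)=\sup_{s\ge 0}g_0(s)$, which is literally the conclusion of the lemma applied to $(g_n)$.

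Three ingredients need to be checked. First, $g_n\to g_0$ in $D$: this follows from continuity of the shift $h\mapsto h(\,\cdot+T)$ in the $J_1$-topology on $D$. Second, the finiteness of the supremum transfers trivially:
$$
\sup_{s\ge 0}g_0(s)=\sup_{t\ge T}f_0(t)\le M_0<\infty.
$$
Third, the double-limsup hypothesis transfers identically,
$$
\limsup_{s\to\infty}\limsup_{n\to\infty}g_n(s)=\limsup_{t\to\infty}\limsup_{n\to\infty}f_n(t),
$$
and this quantity must be strictly smaller than $\sup_{s\ge 0}g_0(s)=\sup_{t\ge T}f_0(t)$ for Lemma~\ref{lem:all-time-sup} to apply to $(g_n)$. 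Granted these three points, the lemma delivers $\lim_n\sup_{s\ge 0}g_n(s)=\sup_{s\ge 0}g_0(s)$, which is the corollary.

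The main obstacle I foresee is the last of these: the original hypothesis only provides strict inequality with $M_0$, whereas for $(g_n)$ one must compare with the possibly smaller quantity $\sup_{t\ge T}f_0(t)$. I would close this gap using $J_1$-convergence at continuity points of $f_0$: since $f_n(t)\to f_0(t)$ at every continuity point $t$ of $f_0$ and $f_0\in D$ has at most countably many discontinuities, choosing continuity points $t_k\to\infty$ gives
$$
\limsup_{t\to\infty}\limsup_{n\to\infty}f_n(t)\le\limsup_{t\to\infty}f_0(t)\le\sup_{t\ge T}f_0(t).
$$
Combined with the strict inequality $<M_0$ coming from the standing hypothesis, this yields the strict bound required to apply Lemma~\ref{lem:all-time-sup} to $(g_n)$, thereby completing the argument.
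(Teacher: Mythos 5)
Your route---shifting by $T$ and invoking Lemma~\ref{lem:all-time-sup} for $g_n:=f_n(T+\cdot)$---is exactly the paper's one-line proof. You are also right that there is an implicit step here: for $(g_n)$ the lemma requires $\limsup_{s\to\infty}\limsup_n g_n(s)<\widetilde M_0:=\sup_{t\ge T}f_0(t)$, and $\widetilde M_0$ may be strictly smaller than $M_0$, so the given strict inequality $<M_0$ is not enough on its own. But your proposed repair does not actually close this gap. The chain
$$\limsup_{t\to\infty}\limsup_{n\to\infty}f_n(t)\ \le\ \limsup_{t\to\infty}f_0(t)\ \le\ \sup_{t\ge T}f_0(t)$$
is non-strict, and ``combined with $<M_0$'' is a non sequitur: nothing in those two facts excludes the edge case $\limsup_{t\to\infty}\limsup_n f_n(t)=\sup_{t\ge T}f_0(t)<M_0$, in which the lemma cannot be applied to $(g_n)$. (Separately, your justification of the first inequality via continuity points is loose, since $\limsup_{t\to\infty}$ ranges over all $t$, not along a sequence of continuity points; the inequality does hold, but because $J_1$-convergence forces every subsequential limit of $f_n(t)$ to lie in $\{f_0(t-),f_0(t)\}$, and $\limsup_{t\to\infty}\max(f_0(t-),f_0(t))=\limsup_{t\to\infty}f_0(t)$ for c\`adl\`ag $f_0$.)

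The edge case is not vacuous. With $f_0:=\1_{[0,1)}$, $f_n:=\1_{[0,1)}+\tfrac{1}{2}\1_{[n,\infty)}$ and $T:=2$ one has $f_n\to f_0$ in $D$, $M_0=1$ and $\limsup_{t\to\infty}\limsup_n f_n(t)=0<M_0$, yet $\sup_{t\ge 2}f_n(t)\equiv\tfrac{1}{2}\not\to 0=\sup_{t\ge 2}f_0(t)$. So, read verbatim, the corollary needs the additional hypothesis $\limsup_{t\to\infty}\limsup_n f_n(t)<\sup_{t\ge T}f_0(t)$. This is harmless in the paper because every application of Lemma~\ref{lem:all-time-sup} and Corollary~\ref{cor1} there occurs with double $\limsup$ equal to $-\infty$, so the strict inequality is automatic for every $T$ and both the paper's one-line proof and yours go through; in general, however, the step you correctly flagged is a genuine gap, and your argument as written does not resolve it.
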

\begin{proof}
Apply Lemma~\ref{lem:all-time-sup} to the sequence $(f_n(T+\cdot))_{n\in\mn_0}$.
\end{proof}

We shall need Theorem 1.3.17 in \cite{Iksanov:2016} which we state as Proposition \ref{Iks} and a slight extension of Lemma 1.3.18 in \cite{Iksanov:2016} which we state as Lemma \ref{Iks1}.
Let $C[0,\infty)$ be the set of continuous functions defined on $[0,\infty)$ equipped with the locally uniform topology. Denote by $M_p$ the set of point measures
$\nu$ on $[0,\infty)\times (0,\infty]$ which satisfy
\begin{equation*}\label{1_0_1}
\nu([0, r]\times [\delta,\infty])<\infty
\end{equation*}
for all $r>0$ and all $\delta>0$. The set $M_p$ is endowed with
the topology of vague convergence. Define the mapping $\mathcal{F}$ from $D\times
M_p$ to $D$ by
\begin{equation*}
\mathcal{F}(f,\nu)(t):=
\begin{cases}
        \sup_{k:\ \theta_k\leq t}\,(f(\theta_k)+y_k),  & \text{if} \ \theta_k\leq t \ \text{for
some} \ k,\\
        f(0), & \text{otherwise},
\end{cases}
\end{equation*}
where $\nu = \sum_k \varepsilon_{(\theta_k,\,y_k)}$.
\begin{assertion}\label{Iks}
For $j\in\mn$, let $f_j\in D$ and $\nu_j\in M_p$. Assume that
$f_0\in C[0,\infty)$ and
\begin{itemize}

\item $\nu_0(\{0\}\times (0,+\infty])=0$,

\item $\nu_0((r_1,r_2)\times(0,\infty])\geq 1$ for all positive $r_1$ and $r_2$
such that $r_1<r_2$,

\item $\nu_0 = \sum_k \varepsilon_{\big(\theta^{(0)}_k,\,y^{(0)}_k\big)}$ does not have clustered jumps,
that is, $\theta^{(0)}_k\neq \theta^{(0)}_j$ for $k\neq j$.
\end{itemize}

\noindent If $\lim_{j\to\infty} f_j= f_0$ in the $J_1$-topology on $D$ and $\lim_{j\to\infty} \nu_j=\nu_0$ on $M_p$, then
\begin{equation*}
\lim_{j\to\infty} \mathcal{F}(f_j,\nu_j)= \mathcal{F}(f_0,\nu_0)
\end{equation*}
in the $J_1$-topology on $D$.
\end{assertion}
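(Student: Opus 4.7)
The approach will exploit the fact that $t\mapsto\mathcal{F}(f,\nu)(t)$ is non-decreasing, since enlarging the time window only adds terms to the supremum. Consequently, all converging functions and the limit lie in the cone of non-decreasing c\`adl\`ag functions, and on this cone $J_1$-convergence reduces to pointwise convergence at every continuity point of the limit. Since $\mathcal{F}(f_0,\nu_0)$ can jump only at the (at most countable) atom times $\theta_k^{(0)}$ of $\nu_0$, it suffices to establish $\mathcal{F}(f_j,\nu_j)(t)\to\mathcal{F}(f_0,\nu_0)(t)$ at every $t>0$ with $t\neq \theta_k^{(0)}$ for all $k$.

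Fix such a $t$ and a small $\delta>0$ chosen to avoid the (at most countable) set of atom heights of $\nu_0$, and split
$$
\mathcal{F}(f_j,\nu_j)(t)=M_\delta^{(j)}(t)\vee m_\delta^{(j)}(t),
$$
where $M_\delta^{(j)}$ is the supremum over atoms with $y_k^{(j)}\geq \delta$ and $m_\delta^{(j)}$ is the supremum over atoms with $y_k^{(j)}<\delta$. For the big-atom part, vague convergence $\nu_j\to\nu_0$ together with the no-clustering hypothesis produces a labelled matching of the finitely many atoms of $\nu_0$ in $[0,t+1]\times[\delta,\infty]$ with atoms of $\nu_j$, in such a way that $(\theta_k^{(j)},y_k^{(j)})\to(\theta_k^{(0)},y_k^{(0)})$ coordinate-wise. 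Since $f_0\in C[0,\infty)$, $J_1$-convergence $f_j\to f_0$ upgrades to locally uniform convergence on compacts, so $f_j(\theta_k^{(j)})\to f_0(\theta_k^{(0)})$. The assumption $t\neq \theta_k^{(0)}$ guarantees that for $j$ large the constraint $\theta_k^{(j)}\leq t$ matches $\theta_k^{(0)}\leq t$, whence $M_\delta^{(j)}(t)\to M_\delta^{(0)}(t)$.

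The small-atom part is controlled by the elementary inequality $m_\delta^{(j)}(t)\leq \sup_{0\leq s\leq t} f_j(s)+\delta$, which together with the uniform convergence of $f_j$ yields $\limsup_{j\to\infty} m_\delta^{(j)}(t)\leq \sup_{0\leq s\leq t} f_0(s)+\delta$. Here the hypothesis $\nu_0((r_1,r_2)\times(0,\infty])\geq 1$ is decisive: it forces the atom times of $\nu_0$ (with positive heights) to be dense in $[0,\infty)$, so for each $s\leq t$ one finds atoms $\theta_k^{(0)}$ arbitrarily close to $s$, and continuity of $f_0$ then gives $\mathcal{F}(f_0,\nu_0)(t)\geq \sup_{0\leq s\leq t} f_0(s)$. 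Combining yields
$$
\liminf_{j\to\infty}\mathcal{F}(f_j,\nu_j)(t)\geq M_\delta^{(0)}(t)\uparrow \mathcal{F}(f_0,\nu_0)(t)\quad\text{as } \delta\downarrow 0
$$
and
$$
\limsup_{j\to\infty}\mathcal{F}(f_j,\nu_j)(t)\leq \mathcal{F}(f_0,\nu_0)(t)+\delta,
$$
and sending $\delta\downarrow 0$ finishes the pointwise convergence, hence the $J_1$-convergence.

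I anticipate the main obstacle to be threading the $\delta$- and $j$-limits cleanly: the small-atom tail $m_\delta^{(j)}(t)$ of the prelimit could \emph{a priori} overshoot $\mathcal{F}(f_0,\nu_0)(t)$ by an amount that does not vanish with $\delta$, and this is ruled out precisely by the density hypothesis on $\nu_0$, which forces $\sup_{s\leq t} f_0(s)\leq \mathcal{F}(f_0,\nu_0)(t)$. Secondary technicalities are the harmless choice of $\delta$ outside the countable set of $\nu_0$-atom heights (so that no atom sits on the $\delta$-boundary) and the standard care in matching atoms of $\nu_j$ lying near $\{s=t\}$, which is handled by requiring $t$ to lie outside the atom times of $\nu_0$.
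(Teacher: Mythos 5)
The paper does not prove this assertion; it merely quotes Theorem 1.3.17 from Iksanov (2016), so there is no in-text proof to compare against. Your argument, however, has a genuine gap in its very first reduction. You assert that for non-decreasing c\`adl\`ag functions, $J_1$-convergence is equivalent to pointwise convergence at every continuity point of the limit. That equivalence holds for the $M_1$ topology, not for $J_1$: a single jump of the limit can be approximated by two nearby half-jumps of the prelimit, giving pointwise convergence at all continuity points but a $J_1$-distance bounded away from zero. (Concretely, $x=\1_{[1,\infty)}$ and $x_n=\tfrac12\1_{[1-1/n,\infty)}+\tfrac12\1_{[1+1/n,\infty)}$.) In the present setting $\mathcal{F}(f_0,\nu_0)$ is genuinely discontinuous at those atom times $\theta_k^{(0)}$ for which $f_0(\theta_k^{(0)})+y_k^{(0)}$ exceeds the running supremum from earlier atoms, so this is not a vacuous concern and the reduction is wrong as stated.

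The remainder of your argument is sound: the $\delta$-split into big and small atoms, the labelled matching of big atoms furnished by vague convergence together with the no-clustering hypothesis (with $\delta$ and $t$ chosen off the countable sets of atom heights and atom times), the bound $m_\delta^{(j)}(t)\le\sup_{s\le t}f_j(s)+\delta$, and the observation that density of atom times plus continuity of $f_0$ force $\mathcal{F}(f_0,\nu_0)(t)\ge\sup_{s\le t}f_0(s)$ — all of this is correct and establishes pointwise convergence at continuity points of $\mathcal{F}(f_0,\nu_0)$, hence $M_1$-convergence. To close the gap you must additionally rule out jump-splitting, i.e.\ show that each jump of $\mathcal{F}(f_0,\nu_0)$ at $\theta_k^{(0)}$ is reproduced by a single jump of $\mathcal{F}(f_j,\nu_j)$ at the matched time $\theta_k^{(j)}$. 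The ingredient you need is already in your atom-matching step: for $j$ large, the no-clustering hypothesis makes the big atoms of $\nu_j$ on $[0,T]\times[\delta,\infty]$ in bijection with those of $\nu_0$, so one can take the $J_1$ time change $\lambda_j$ to be the piecewise-linear bijection of $[0,T]$ sending $\theta_k^{(j)}\mapsto\theta_k^{(0)}$, and then verify that $\sup_{t\le T}\big|\mathcal{F}(f_j,\nu_j)(\lambda_j^{-1}(t))-\mathcal{F}(f_0,\nu_0)(t)\big|\to 0$ using the uniform closeness of $f_j$ to $f_0$ and the $\delta$-control on the small atoms. Without this extra step, what you have proved is $M_1$-convergence, not the $J_1$-convergence claimed.
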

\begin{lemma}\label{Iks1}
Let $T\geq 0$, $\gamma,\rho>0$. The random measure $N^{(\gamma,\,\rho)}_T:=\sum_k\1_{\{t_k^{(\gamma,\,\rho)}\geq T\}}\varepsilon_{(t_k^{(\gamma,\,\rho)}-T,\,j_k^{(\gamma,\,\rho)})}$ satisfies with probability one all the assumptions imposed on $\nu_0$ in Proposition \ref{Iks}. Here, $(t_k^{(\gamma,\,\rho)},\,j_k^{(\gamma,\,\rho)})$ are the atoms of a Poisson random measure $N^{(\gamma,\,\rho)}$ defined in the paragraph preceding Theorem \ref{perturbed_weak}.
\end{lemma}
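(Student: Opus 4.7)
The plan is first to reduce the problem to the unshifted case. By the mapping theorem for Poisson random measures applied to the translation $(t,y)\mapsto(t-T,y)$ on the restriction of $N^{(\gamma,\,\rho)}$ to $[T,\infty)\times(0,\infty]$, the measure $N^{(\gamma,\,\rho)}_T$ is itself a Poisson random measure on $[0,\infty)\times(0,\infty]$ with intensity $\mathbb{LEB}\times\mu_{\gamma,\,\rho}$, hence equidistributed with $N^{(\gamma,\,\rho)}$. It therefore suffices to verify the three hypotheses on $\nu_0$ in Proposition \ref{Iks} for $N^{(\gamma,\,\rho)}$ itself.

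I would then dispatch the three properties in turn. For the ``no atom at $t=0$'' condition, for each $\delta>0$ one has
$$
\me\,N^{(\gamma,\,\rho)}\bigl(\{0\}\times(\delta,\infty]\bigr)=\mathbb{LEB}(\{0\})\,\mu_{\gamma,\,\rho}((\delta,\infty])=0,
$$
so this Poisson count vanishes a.s.; a countable union over $\delta=1/n$ yields $N^{(\gamma,\,\rho)}(\{0\}\times(0,\infty])=0$ a.s. For the density condition I would compute, for any $0<r_1<r_2$,
$$
\me\,N^{(\gamma,\,\rho)}\bigl((r_1,r_2)\times(0,\infty]\bigr)=(r_2-r_1)\,\mu_{\gamma,\,\rho}((0,\infty])=+\infty,
$$
so the count is a.s.\ infinite; intersecting over the countable family of rational pairs $(r_1,r_2)$ and then sandwiching arbitrary real pairs between rational ones extends the bound $\geq 1$ simultaneously to all positive $r_1<r_2$ on a single event of full probability.

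Finally, for the distinctness of the time coordinates, I would consider, for each $n\in\mn$, the restriction of $N^{(\gamma,\,\rho)}$ to atoms with marks in $[1/n,\infty]$. Projected onto the time axis, this restriction is a Poisson process on $[0,\infty)$ with finite rate $\mu_{\gamma,\,\rho}([1/n,\infty])=\gamma n^\rho$, hence a.s.\ simple. Every atom of $N^{(\gamma,\,\rho)}$ has a strictly positive mark and therefore lies in the restriction for $n$ large enough, so taking a countable intersection over $n$ yields that a.s.\ $t_k^{(\gamma,\,\rho)}\neq t_j^{(\gamma,\,\rho)}$ for $k\neq j$.

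I do not anticipate any substantive obstacle: the lemma is a bundle of three standard features of Poisson random measures with atomless time intensity. The only genuine care needed is the handling of the uncountable quantifier over positive pairs $(r_1,r_2)$ in the density condition, which is dispatched by the usual countable dense subset argument sketched above.
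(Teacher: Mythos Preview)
Your argument is correct. The paper's own proof is considerably terser: for $T=0$ it simply cites Lemma 1.3.18 in \cite{Iksanov:2016}, and for $T>0$ it observes that $N^{(\gamma,\,\rho)}_T$ is a deterministic shift of $N^{(\gamma,\,\rho)}$, so properties (ii) and (iii) carry over pathwise, while property (i) for the shifted measure amounts to the original measure having no atoms on the fixed vertical line $\{T\}\times(0,\infty]$, which holds a.s.\ because the time intensity is Lebesgue. Your reduction via the mapping theorem (so that $N^{(\gamma,\,\rho)}_T\overset{{\rm d}}{=}N^{(\gamma,\,\rho)}$) is an equally valid, slightly more abstract way to reach the unshifted case; you then supply the self-contained verification of the three properties that the paper outsources to the reference. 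Both routes are short and rely on the same underlying facts (atomless time intensity, infinite mark intensity near $0$), so there is no substantive divergence.
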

\begin{proof}
The case $T=0$ is covered by Lemma 1.3.18 in \cite{Iksanov:2016}. If $T>0$, then $N^{(\gamma,\,\rho)}_T$ is just a deterministic shift of $N^{(\gamma,\,\rho)}$. Since the latter does not have atoms on any fixed deterministic vertical line with probability one, the claim follows.
\end{proof}

Hereafter, we write ${\overset{{\rm f.d.}}\longrightarrow}$ and $\overset{\mmp}{\longrightarrow}$ to denote weak convergence of finite-dimensional distributions and convergence in probability, respectively.
\begin{assertion}\label{cor:bm1}
Under the assumptions of Theorem \ref{perturbed_weak0}, for any $T>0$,
\begin{equation}\label{eq:finsup}
\big(a\sup_{0\leq k\leq \lfloor Ta^{-2}\rfloor}\,(S_k-auk+\log \eta_{k+1})\big)_{u\in\mr}~\overset{{\rm f.d.}}{\longrightarrow}~\big(\sup_{s\in [0,\,T]}(\sigma B(s)-us)\big)_{u\in\mr},\quad a\to 0+,
\end{equation}
where $(B(s))_{s\geq 0}$ is a standard Brownian motion, and, for any $T\geq 0$,
\begin{equation}\label{eq:finsup2}
(a\sup_{k \geq \lfloor Ta^{-2}\rfloor}\,(S_k-auk+\log \eta_{k+1}))_{u>0}~\overset{{\rm f.d.}}{\longrightarrow}~(\sup_{s\geq T}\,(\sigma B(s)-us))_{u>0},\quad a\to 0+.
\end{equation}
\end{assertion}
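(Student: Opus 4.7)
My plan is to reduce everything to Donsker's invariance principle for $(S_k)$ and then separately handle two complications: the perturbation $\log\eta_{k+1}$ and the passage from discrete to continuous supremum. Introduce the rescaled walk $W_a(s):=aS_{\lfloor s/a^2\rfloor}$. By the functional central limit theorem, $W_a\Longrightarrow \sigma B$ on $D$ in the $J_1$-topology, and since the limit is a.s.\ continuous we may via Skorokhod coupling assume locally uniform a.s.\ convergence.

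For the first statement, the identity
$$
a\sup_{0\leq k\leq \lfloor Ta^{-2}\rfloor}(S_k-auk)=\sup_{s\in D_a}(W_a(s)-us),\qquad D_a:=a^2\{0,1,\dots,\lfloor Ta^{-2}\rfloor\},
$$
combined with density of $D_a$ in $[0,T]$ and uniform continuity of $s\mapsto \sigma B(s)-us$ on $[0,T]$, yields $\sup_{s\in D_a}(W_a(s)-us)\to\sup_{s\in[0,T]}(\sigma B(s)-us)$ a.s., simultaneously in any finite collection of $u$'s. To absorb $\log\eta_{k+1}$, an upper estimate follows from
$$
\mmp\Big\{a\max_{0\leq k\leq \lfloor Ta^{-2}\rfloor}(\log\eta_{k+1})^+>\varepsilon\Big\}\leq (\lfloor Ta^{-2}\rfloor+1)\mmp\{\log\eta>\varepsilon/a\}\longrightarrow 0
$$
by \eqref{cond1}. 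For the matching lower estimate I will pick $K>0$ with $\mmp\{\log\eta>-K\}>0$, restrict the supremum to the random thinning $\{k:\log\eta_{k+1}>-K\}$, and bound the loss by a window argument: in a window of $\lfloor\delta a^{-2}\rfloor$ consecutive indices the walk fluctuates by $O_{\mmp}(\delta^{1/2}/a)$ by Donsker's theorem, while a Bernoulli law of large numbers guarantees the window contains an index from the thinning with probability close to one. Multiplying by $a$ and sending $\delta\to 0+$ then $a\to 0+$ eliminates the loss.

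The second statement is handled analogously but with the supremum over $k\geq \lfloor Ta^{-2}\rfloor$. Corollary \ref{cor1} applied to the Skorokhod-coupled $f_a(s):=W_a(s)-us$ with limit $f_0(s):=\sigma B(s)-us$ --- whose double-limsup hypothesis reduces, via Remark \ref{rem:cont} and \eqref{eq:expo}, to the strong law of large numbers for Brownian motion --- gives $\sup_{s\geq T}(W_a(s)-us)\to\sup_{s\geq T}(\sigma B(s)-us)$ a.s. To control the $\log\eta_{k+1}$ perturbation over the infinite tail, I split the range into $\lfloor Ta^{-2}\rfloor\leq k\leq \lfloor Ka^{-2}\rfloor$ (treated as above) and $k>\lfloor Ka^{-2}\rfloor$; on the latter the drift $-auk$ dominates, since a crude integral estimate based on \eqref{cond1} gives $\sum_{k>K/a^2}\mmp\{\log\eta>auk/2\}=O(1/(u^2K))$ uniformly in small $a$, while $\sup_{s\geq K}(\sigma B(s)-us)\to-\infty$ a.s.\ as $K\to\infty$. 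Sending $K\to\infty$ at the end completes the argument.

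The principal difficulty is the lower-bound window argument for removing the $\log\eta_{k+1}$ perturbation, because \eqref{cond1} gives no control over the negative tail of $\log\eta$. The argument must show that even though $\log\eta$ may take arbitrarily large negative values, with high probability one can still locate indices near the argmax with $\log\eta_{k+1}\geq -K$; this requires a quantitative coupling of the Bernoulli LLN with the Donsker modulus of continuity uniformly over the random location of the argmax.
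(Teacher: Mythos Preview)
Your proposal is correct. The window argument for the lower bound can be made rigorous by covering $[0,\lfloor Ta^{-2}\rfloor]$ with $O(T/\delta)$ \emph{fixed} windows of length $\lfloor\delta a^{-2}\rfloor$: a union bound gives that every window contains an index $k$ with $\log\eta_{k+1}>-K$ with probability $1-O((T/\delta)(1-p)^{\delta a^{-2}})\to 1$, and this decouples the Bernoulli event from the random location of the argmax (which is the dependence issue you flag at the end).

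The paper's route is shorter but genuinely different. Rather than separating upper and lower bounds for the supremum, it argues that the perturbed process $\big(a(S_{\lfloor sa^{-2}\rfloor}+\log\eta_{\lfloor sa^{-2}\rfloor+1})\big)_{s\geq 0}$ itself converges to $(\sigma B(s))_{s\geq 0}$ in $D$: from $a\max_{1\leq k\leq \lfloor Ta^{-2}\rfloor+1}\log\eta_k\overset{\mmp}{\to}0$ (via \eqref{cond1}) it infers $(a\log\eta_{\lfloor sa^{-2}\rfloor+1})_s\Longrightarrow 0$ on $D$, then applies Slutsky. Relation \eqref{eq:finsup} is then immediate from the continuous mapping theorem for the supremum functional, and \eqref{eq:finsup2} follows by Skorokhod representation plus Corollary~\ref{cor1} and Remark~\ref{rem:cont} applied directly to the perturbed process --- no tail splitting into $[T,K]$ and $(K,\infty)$ is needed.

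Your longer route buys something real, however. The paper's implication ``$a\max_k\log\eta_k\to 0$ in probability $\Rightarrow$ $(a\log\eta_{\lfloor sa^{-2}\rfloor+1})_s\to 0$ in $D$'' actually requires $a\min_{k\leq Ta^{-2}}\log\eta_k\to 0$ as well, and the hypotheses impose no condition on the left tail of $\log\eta$; if, say, $\mmp\{\log\eta<-t\}\sim t^{-2}$ then $a\min_{k\leq Ta^{-2}}\log\eta_k$ is of order $-1$ and the implication fails. Your window argument is precisely what bypasses this: large negative values of $\log\eta_{k+1}$ do not affect the supremum because a nearby index with moderate $\log\eta$ can always be found. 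So what you flag as ``the principal difficulty'' is a genuine one that the paper's streamlined proof glosses over.
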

\begin{proof}
We shall write $\zeta$ for $\log \eta$ and $\zeta_k$ for $\log \eta_k$, $k\in\mn$.

By Donsker's theorem,
$$(aS_{\lfloor Ta^{-2}\rfloor})_{T\geq 0}~\Longrightarrow~(\sigma B(T))_{T\geq 0},\quad a\to 0+,$$ on $D$. Fix any $T>0$. Since, for all $\varepsilon>0$,
\begin{eqnarray*}
\mmp\{a\max_{1\leq k\leq \lfloor Ta^{-2}\rfloor+1}\,\zeta_k>\varepsilon\}&=&1-\big(\mmp\{\zeta \leq\varepsilon a^{-1}\}\big)^{\lfloor Ta^{-2}\rfloor+1}\\&\leq& (\lfloor Ta^{-2}\rfloor+1)\mmp\{\zeta>\varepsilon a^{-1}
\}\to 0
\end{eqnarray*}
as $a\to 0+$ in view of \eqref{cond1}, we infer
$$a\max_{1\leq k\leq \lfloor Ta^{-2}\rfloor+1}\,\zeta_k~\overset{\mmp}{\longrightarrow}~ 0,\quad a\to 0+,$$ which implies
$$(a \zeta_{\lfloor Ta^{-2}\rfloor+1})_{T\geq 0}~\Longrightarrow~ (\Xi(T))_{T\geq 0},\quad a\to 0+$$ on $D$ where $\Xi(t)=0$ for $t\geq 0$. Hence,
$$(a(S_{\lfloor Ta^{-2}\rfloor}+\zeta_{\lfloor Ta^{-2}\rfloor +1}))_{T\geq 0}~\Longrightarrow~(\sigma B(T))_{T\geq 0},\quad a\to 0+
$$ by Slutsky's lemma and thereupon, for any $n\in\mn$ and any $-\infty<u_1<\ldots<u_n<\infty$,
\begin{multline}
(a(S_{\lfloor Ta^{-2}\rfloor}- au_1\lfloor Ta^{-2}\rfloor+\zeta_{\lfloor Ta^{-2}\rfloor +1}),\ldots, a(S_{\lfloor Ta^{-2}\rfloor}- au_n\lfloor Ta^{-2}\rfloor+\zeta_{\lfloor Ta^{-2}\rfloor +1}) )_{T\geq 0}\\~\Longrightarrow~ (\sigma B(T)-u_1 T, \ldots, \sigma B(T)-u_nT)_{T\geq 0},\quad a\to 0+ \label{eq:finitedim}
\end{multline}
in the $J_1$-topology on $D^n$. The supremum functional is continuous in the $J_1$-topology. This in combination with the continuous
mapping theorem proves \eqref{eq:finsup}.

By the Skorokhod representation theorem, there exist versions of the processes in \eqref{eq:finitedim}, for which \eqref{eq:finitedim} holds a.s., with $a$ replaced by $(a_k)_{k\in\mn}$ an arbitrary convergent to $0$ sequence of positive numbers. Each coordinate of the version of the limit is a.s.\ continuous and, for each $u>0$, $\lim_{s\to\infty}(\sigma B(s)-us)=-\infty$ a.s.\ by the strong law of large numbers for a Brownian motion. The latter ensures that the a.s.\ counterpart of \eqref{eq:double_limsup_condition2} holds for each coordinate of the version of the limit. Applying Corollary \ref{cor1} and Remark \ref{rem:cont} separately to each coordinate of the versions and passing from the versions to the original processes we arrive at \eqref{eq:finsup2}.
\end{proof}
\begin{assertion}\label{thm:ppp}
Under the assumptions of Theorem \ref{perturbed_weak}, for any $T>0$,
\begin{equation}\label{eq:ppp_claim11}
\Big(\frac{1}{ac(a)}\sup_{k\leq \lfloor Tc(a)\rfloor }(S_k-auk+\log \eta_{k+1})\Big)_{u>0}~\overset{{\rm f.d.}}{\longrightarrow}~\big(\sup_{k:\,t^{(1,\,\beta)}_k\leq T}(-ut^{(1,\,\beta)}_k+j_k^{(1,\,\beta)})\big)_{u>0},\quad a\to 0+
\end{equation}
and, for any $T\geq 0$,
\begin{equation}\label{eq:ppp_claim2}
\Big(\frac{1}{ac(a)}\sup_{k\geq \lfloor Tc(a)\rfloor}\,(S_k-auk+\log \eta_{k+1})\Big)_{u>0}~\overset{{\rm f.d.}}{\longrightarrow}~ \big(\sup_{k:\;t^{(1,\,\beta)}_k \geq T}(-ut^{(1,\,\beta)}_k+j_k^{(1,\,\beta)})\big)_{u>0},\quad a\to 0+.
\end{equation}

\noindent Under the assumptions of Theorem \ref{main_perturbed_weak convergence}, for any $T>0$,
\begin{equation}\label{eq:ppp_bm_claim11}
\big(a\sup_{k\leq \lfloor Ta^{-2}\rfloor}(S_k-auk+\log \eta_{k+1})\big)_{u>0}~\overset{{\rm f.d.}}{\longrightarrow}~\big(\sup_{k,\,t^{(\lambda,\,2)}_k\geq T}(\sigma B(t^{(\lambda,\,2)}_k)-ut^{(\lambda,\,2)}_k+j_k^{(\lambda,\,2)})\big)_{u>0},\quad a\to 0+
\end{equation}
and, for any $T\geq 0$,
\begin{equation}\label{eq:ppp_bm_claim2}
\big(a\sup_{k\geq \lfloor Ta^{-2}\rfloor}(S_k-auk+\log \eta_{k+1})\big)_{u>0}~\overset{{\rm f.d.}}{\longrightarrow}~\big(\sup_{k:\;t^{(\lambda,\,2)}_k \geq T}(\sigma B(t^{(\lambda,\,2)}_k)-ut^{(\lambda,\,2)}_k+j_k^{(\lambda,\,2)})\big)_{u>0},\quad a\to 0+.
\end{equation}
\end{assertion}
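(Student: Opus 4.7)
The plan is that all four claims fit a common template: normalize the supremum and rescale the time index so that
\begin{equation*}
\frac{1}{\mathrm{norm}(a)}\sup_{0\leq k\leq \lfloor T\tau(a)\rfloor}(S_k - auk+\log\eta_{k+1}) = \mathcal{F}(\hat f_a, \hat\nu_a)(T),
\end{equation*}
where $\tau(a)$ is the natural time scale, $\hat f_a\in D$ is the rescaling of $(S_k - auk)_k$, and $\hat\nu_a\in M_p$ is the point measure with atoms at the rescaled times and rescaled log-marks. Proposition \ref{Iks} is a deterministic continuity statement for $\mathcal{F}$; applied pathwise via Skorokhod's representation to joint weak convergence $(\hat f_a,\hat\nu_a) \Longrightarrow (f_0,\nu_0)$ in the product topology on $D\times M_p$, it delivers finite-dimensional distributional convergence on any $[0,T']$ with $T'>T$. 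The tail claims \eqref{eq:ppp_claim2} and \eqref{eq:ppp_bm_claim2} follow by applying the same reasoning to the time-shifted pair whose limiting point measure is $N^{(\gamma,\,\rho)}_T$ of Lemma \ref{Iks1} and which meets the hypotheses of Proposition \ref{Iks}.

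\textbf{Setting of Theorem \ref{perturbed_weak}.} Take $\tau(a)=c(a)$ and $\mathrm{norm}(a)=ac(a)$, so that $\hat f_a(t) = S_{\lfloor tc(a)\rfloor}/(ac(a)) - ut$ and $\hat\nu_a = \sum_k\varepsilon_{(k/c(a),\,\log\eta_{k+1}/(ac(a)))}$. Donsker's theorem gives $\sup_{0\leq t\leq T'}|S_{\lfloor tc(a)\rfloor}| = O_{\mmp}(\sqrt{c(a)})$, and since $ac(a) = \sqrt{c(a)}\cdot\sqrt{a^2c(a)}$ with $a^2c(a)\to\infty$ by \eqref{eq:a2c(a)}, $\hat f_a \to -u\,\cdot$ in probability uniformly on compacts, hence in $J_1$. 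Vague convergence $\hat\nu_a \Longrightarrow N^{(1,\,\beta)}$ reduces to the check $c(a)\mmp\{\log\eta>ac(a)x\}\to x^{-\beta}$, which is immediate from the regular variation of $\mmp\{\log\eta>\cdot\}$ combined with $b(c(a))\sim ac(a)$. Since the limit $-u\,\cdot$ is deterministic, joint convergence in $D\times M_p$ is automatic, and \eqref{eq:ppp_claim11} follows.

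\textbf{Setting of Theorem \ref{main_perturbed_weak convergence}.} Now $\tau(a)=a^{-2}$ and $\mathrm{norm}(a)=1/a$, giving $\hat f_a(t) = aS_{\lfloor t/a^2\rfloor} - ut$ and $\hat\nu_a = \sum_k \varepsilon_{(a^2k,\,a\log\eta_{k+1})}$. Marginal convergence $\hat f_a \Longrightarrow \sigma B - u\,\cdot$ is Donsker, while $\hat\nu_a \Longrightarrow N^{(\lambda,\,2)}$ follows from \eqref{11}. The genuine obstacle is joint convergence with $B$ \emph{independent} of $N^{(\lambda,\,2)}$ in the limit, given that $\xi$ and $\eta$ may be arbitrarily dependent. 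The key observation is that $\me\xi^2<\infty$ entails $\mmp\{|\xi|>\delta/a\} = o(a^2)$, so the expected number of $k\leq \lfloor T/a^2\rfloor$ with $|\xi_k|>\delta/a$ vanishes; hence any ``big'' mark $a\log\eta_{k+1}$ of $\hat\nu_a$ is, with high probability, accompanied by a jump $a\xi_{k+1}$ of $\hat f_a$ of absolute value at most $\delta$. A standard truncation—replace $\log\eta_{k+1}$ by $\log\eta_{k+1}\wedge(M/a)$ to decouple the truncated random walk from the surviving big marks, then let $M\to\infty$—yields the asymptotic independence and thereby the required joint convergence. The remaining steps proceed as in the first setting, and this truncation/decoupling step is the one I expect to be the most delicate.
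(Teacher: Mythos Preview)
Your route to the finite-interval claims \eqref{eq:ppp_claim11} and \eqref{eq:ppp_bm_claim11} is the paper's: joint convergence in $D\times M_p$, then Proposition~\ref{Iks} applied pathwise via Skorokhod's representation. The paper simply cites \cite{Iksanov:2016} for the joint convergence with asymptotic independence in the setting of Theorem~\ref{main_perturbed_weak convergence}, while you sketch the truncation argument behind it; either is acceptable.

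The gap is in the tail claims \eqref{eq:ppp_claim2} and \eqref{eq:ppp_bm_claim2}. Proposition~\ref{Iks} concerns $\mathcal{F}(f,\nu)(t)$ for \emph{finite} $t$; applied to the time-shifted pair with limiting measure $N^{(\gamma,\rho)}_T$, it delivers only convergence of the supremum over $\{\lfloor Tc(a)\rfloor\leq k\leq \lfloor T_1c(a)\rfloor\}$ for each fixed $T_1>T$, not over $\{k\geq \lfloor Tc(a)\rfloor\}$. Passing to $T_1=\infty$ is a separate step: one must show that the probability the all-time supremum differs from the $[T,T_1]$-supremum vanishes as $T_1\to\infty$, uniformly in small $a$. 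The paper does this via Theorem~4.2 in \cite{Billingsley:1968}, bounding
\[
\mmp\Big\{\sup_{\lfloor Tc(a)\rfloor\leq k\leq \lfloor T_1c(a)\rfloor}(\cdots)\neq \sup_{k\geq \lfloor Tc(a)\rfloor}(\cdots)\Big\}
\]
by a random-walk tail term $\mmp\{\sup_{k>\lfloor T_1c(a)\rfloor}(S_k-auk/2)>ac(a)\}$, controlled through Proposition~\ref{cor:bm1} and \eqref{eq:a2c(a)}, plus a marks tail term $\mmp\{\sup_{k>\lfloor T_1c(a)\rfloor}(\zeta_{k+1}-auk/2)>(z-1)ac(a)\}$, controlled by an explicit regular-variation integral estimate. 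This tail negligibility is where the paper actually spends its effort; the decoupling you flag as ``most delicate'' is disposed of by a citation. Your ``same reasoning'' does not supply this argument, and without it the supremum over an unbounded index set is not shown to converge.
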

\begin{proof}
In the setting of Theorem \ref{perturbed_weak} put $c(a):=a^{-2}$ for $a>0$.

Fix $u>0$ and $T\geq 0$. Under the assumptions of Theorem \ref{main_perturbed_weak convergence}, there is a joint convergence
\begin{multline}\label{eq:joint_conv_bm_T}
\Big(\Big(\frac{S_{\lfloor (t+T) c(a)\rfloor}}{\sqrt{c(a)}}-u(t+T)\Big)_{t\geq 0},\sum_{k\geq 0}\1_{\{k\geq \lfloor Tc(a)\rfloor\}}\varepsilon_{(k/c(a)-T,\, \zeta_{k+1}/\sqrt{c(a)})}\Big)\\
~\Longrightarrow~\Big((\sigma B(t+T)-u(t+T))_{t\geq 0},
\sum_{k}\1_{\{t_k^{(\lambda,\,2)}\geq T\}}\varepsilon_{(t_k^{(\lambda,\,2)}-T,\,j_k^{(\lambda,\,2)})}\Big),\quad a\to 0+
\end{multline}
in the space $D\times M_p$ endowed with the product topology, see the bottom of p.~27 in \cite{Iksanov:2016}. Moreover, the components on the right-hand side are independent. Fix now any $n\in\mn$ and any $0<u_1<u_2<\ldots<u_n<\infty$. Then \eqref{eq:joint_conv_bm_T} immediately extends to
\begin{multline}\label{eq:joint_conv_bm_T1}
\Big(\Big(\Big(\frac{S_{\lfloor (t+T) c(a)\rfloor}}{\sqrt{c(a)}}-u_i(t+T)\Big)_{t\geq 0}\Big)_{1\leq i\leq n}, \sum_{k\geq 0}\1_{\{k\geq \lfloor Tc(a)\rfloor\}}\varepsilon_{(k/c(a)-T,\, \zeta_{k+1}/\sqrt{c(a)})}\Big)\\
~\Longrightarrow~\Big(((\sigma B(t+T)-u_i(t+T))_{t\geq 0})_{1\leq i\leq n},
\sum_{k}\1_{\{t_k^{(\lambda,\,2)}\geq T\}}\varepsilon_{(t_k^{(\lambda,\,2)}-T,\,j_k^{(\lambda,\,2)})}\Big),\quad a\to 0+
\end{multline}
in the space $D^n\times M_p$ endowed with the product topology, because the components indexed by $i$ only differ by a deterministic term. Similarly, under the assumptions of Theorem~\ref{perturbed_weak},
\begin{multline}\label{eq:joint_conv_zero_T}
\Big(\Big(\Big(\frac{S_{\lfloor (t+T) c(a)\rfloor}}{ac(a)}-u_i(t+T)\Big)_{t\geq 0}\Big)_{1\leq i\leq n},\sum_{k\geq 0}\1_{\{k\geq \lfloor Tc(a)\rfloor\}}\varepsilon_{(k/c(a)-T,\,\zeta_{k+1}/ac(a))}\Big)\\
~\Longrightarrow~\Big(((-u_i(t+T))_{t\geq 0})_{1\leq i\leq n},\sum_k \1_{\{t_k^{(1,\,\beta)}\geq T\}}\varepsilon_{(t_k^{(1,\,\beta)}-T,\,j_k^{(1,\,\beta)})}\Big),\quad a\to 0+
\end{multline}
in the space $D^n\times M_p$ endowed with the product topology, where the convergence of the normalized random walk to the zero process $\Xi$ follows from~\eqref{eq:a2c(a)} which ensures that $ac(a)/\sqrt{c(a)}=a\sqrt{c(a)}\to \infty$ as $a\to 0+$.

Fix any $T_1>T$ and let $(a_j)_{j\in\mn}$ be any sequence of positive numbers satisfying $\lim_{j\to\infty}a_j=0$. By the Skorokhod representation theorem there are versions of the processes, for which \eqref{eq:joint_conv_bm_T1} and \eqref{eq:joint_conv_zero_T} hold a.s. Retaining the original notation for these versions we intend to apply Proposition \ref{Iks} $n$ times with
\begin{multline*}
f_j(t):=\frac{S_{\lfloor (t+T) c(a_j)\rfloor}}{\sqrt{c(a_j)}}-u_i (t+T),\quad \nu_j:=\sum_{k\geq 0}\1_{\{k\geq \lfloor Tc(a_j)\rfloor\}}\varepsilon_{(k/c(a_j)-T,\, \zeta_{k+1}/\sqrt{c(a_j)})}\\
f_0(t):=\sigma B(t+T)-u_i (t+T),\quad \nu_0:=\sum_{k}\1_{\{t_k^{(\lambda,\,2)}\geq T\}}\varepsilon_{(t_k^{(\lambda,\,2)}-T,\,j_k^{(\lambda,\,2)})},\;\; j\in\mn,\;t\geq 0,\; i=1,\ldots,n
\end{multline*}
and $n$ times with
\begin{multline*}
f_j(t):=\frac{S_{\lfloor (t+T) c(a_j)\rfloor}}{a_jc(a_j)}-u_i (t+T),\quad \nu_j:=\sum_{k\geq 0}\1_{\{k\geq \lfloor Tc(a_j)\rfloor\}}\varepsilon_{(k/c(a_j)-T,\,\zeta_{k+1}/a_jc(a_j))}\\
f_0(t):=-u_i (t+T),\quad \nu_0:=\sum_{k}\1_{\{t_k^{(1,\,\beta)}\geq T\}}\varepsilon_{(t_k^{(1,\,\beta)}-T,\,j_k^{(1,\,\beta)})},\;\;j\in\mn,\;t\geq 0,\;i=1,\ldots,n.
\end{multline*}
The so defined converging and limit processes satisfy the assumptions of Proposition \ref{Iks} with probability one. In particular, a.s.\ continuity of the limit functions $f_0$ is obvious, whereas Lemma \ref{Iks1} justifies the claim for the random measures $\nu_0$. A specialization of Proposition \ref{Iks} to the one-dimensional (rather than functional) convergence in conjunction with \eqref{eq:joint_conv_bm_T1} and \eqref{eq:joint_conv_zero_T} yields 
\begin{multline}\label{eq:ppp_bm_claim113}
\Big(a\sup_{\lfloor Tc(a)\rfloor \leq k\leq \lfloor T_1 c(a)\rfloor}\big(S_k-auk+\zeta_{k+1}\big)\Big)_{u>0}\\~\overset{{\rm f.d.}}{\longrightarrow}~\big(\sup_{k:\; T\leq t^{(\lambda,\,2)}_k\leq T_1}(\sigma B(t^{(\lambda,\,2)}_k)-ut^{(\lambda,\,2)}_k+ j_k^{(\lambda,\,2)})\Big)_{u>0},\quad a\to 0+
\end{multline}
and
\begin{multline}\label{eq:ppp_claim114}
\Big(\frac{1}{ac(a)}\sup_{\lfloor Tc(a)\rfloor \leq k\leq \lfloor T_1c(a)\rfloor}\big(S_k-auk+\zeta_{k+1}\big)\Big)_{u>0}\\~\overset{{\rm f.d.}}{\longrightarrow}~\big(\sup_{k:\; T\leq t^{(1,\,\beta)}_k\leq T_1}(-ut^{(1,\,\beta)}_k+j_k^{(1,\,\beta)})\big)_{u>0},\quad a\to 0+,
\end{multline}
respectively. Putting in \eqref{eq:ppp_bm_claim113} and \eqref{eq:ppp_claim114} $T=0$ and then replacing $T_1$ with $T$ we obtain \eqref{eq:ppp_bm_claim11} and \eqref{eq:ppp_claim11}.

The right-hand sides of~\eqref{eq:ppp_bm_claim113} and~\eqref{eq:ppp_claim114} converge a.s.\ as $T_1\to\infty$ to the right-hand sides of~\eqref{eq:ppp_bm_claim2} and~\eqref{eq:ppp_claim2}, respectively. According to Theorem 4.2 on p.~25 in \cite{Billingsley:1968}, both \eqref{eq:ppp_claim2} and \eqref{eq:ppp_bm_claim2} follow if we can show that, for all $\varepsilon>0$, 
$$\lim_{T_1\to\infty}\limsup_{a\to 0+}\mathbb{P}\Big\{\sum_{i=1}^n \big(\sup_{\lfloor Tc(a)\rfloor \leq k}\big(S_k-auk+\zeta_{k+1}\big)-\sup_{\lfloor Tc(a)\rfloor \leq k\leq \lfloor T_1c(a)\rfloor}\big(S_k-auk+\zeta_{k+1}\big)\big)^2>\varepsilon\Big\}=0.$$ Plainly, it is sufficient to check that, for each fixed $u>0$,
$$
\lim_{T_1\to\infty}\limsup_{a\to 0+}\mathbb{P}\big\{\sup_{\lfloor Tc(a)\rfloor \leq k\leq \lfloor T_1c(a)\rfloor}\big(S_k-auk+\zeta_{k+1}\big) \neq \sup_{\lfloor Tc(a)\rfloor \leq k}\big(S_k-auk+\zeta_{k+1}\big)\big\}=0.
$$
Note that, for $T_1\geq 2T$ and $z\in\mr$,
\begin{align*}
&\mathbb{P}\big\{\sup_{\lfloor Tc(a)\rfloor \leq k\leq \lfloor T_1c(a)\rfloor}\big(S_k-auk+\zeta_{k+1}\big) \neq \sup_{\lfloor Tc(a)\rfloor \leq k}\big(S_k-auk+\zeta_{k+1}\big)\big\}\\
&=\mathbb{P}\Big\{\frac{1}{ac(a)}\sup_{\lfloor Tc(a)\rfloor \leq k\leq \lfloor T_1c(a)\rfloor}\big(S_k-auk+\zeta_{k+1}\big) < \frac{1}{ac(a)}\sup_{k > \lfloor T_1c(a)\rfloor}\big(S_k-auk+\zeta_{k+1}\big)\Big\}\\
&\leq \mathbb{P}\Big\{\frac{1}{ac(a)}\sup_{\lfloor Tc(a)\rfloor \leq k \leq \lfloor 2Tc(a)\rfloor}\big(S_k-auk+\zeta_{k+1}\big) < \frac{1}{ac(a)}\sup_{k> \lfloor T_1c(a)\rfloor}\big(S_k-auk+\zeta_{k+1}\big)\Big\}\\
&\leq \mathbb{P}\Big\{\frac{1}{ac(a)}\sup_{\lfloor Tc(a)\rfloor \leq k \leq \lfloor 2Tc(a)\rfloor}\big(S_k-auk+\zeta_{k+1}\big)\leq z\Big\}\\&+\mmp\Big\{\frac{1}{ac(a)}\sup_{k > \lfloor T_1c(a)\rfloor}\big(S_k-auk+\zeta_{k+1}\big)>z\Big\}.
\end{align*}
According to \eqref{eq:ppp_bm_claim113} and \eqref{eq:ppp_claim114}, the random variables $(ac(a))^{-1}\sup_{\lfloor Tc(a)\rfloor \leq k\leq \lfloor 2Tc(a)\rfloor}\big(S_k-auk+\zeta_{k+1}\big)$ converge in distribution as $a\to 0+$ to an a.s.~finite random variable $\rho_T$, say (the a.s.\ finiteness follows from Proposition \ref{prop:marginal}). As a consequence, the first probability on the right-hand side tends to $0$ as $a\to 0+$ and then $z\to -\infty$ along the sequence of continuity points of the distribution function of $\rho_T$. Thus, it is enough to prove that, for each fixed $z\in\mathbb{R}$,
$$
\lim_{T_1\to\infty}\limsup_{a\to 0+}\mathbb{P}\Big\{\frac{1}{ac(a)}\sup_{k > \lfloor T_1 c(a)\rfloor}\big(S_k-auk+\zeta_{k+1}\big)>z\Big\}=0.$$
The latter probability does not exceed
\begin{equation}\label{eq:proof1}
\mathbb{P}\big\{\sup_{k > \lfloor T_1 c(a)\rfloor}(S_k-auk/2)> ac(a)\big\}+\mathbb{P}\big\{\sup_{k > \lfloor T_1 c(a)\rfloor}(\zeta_{k+1}-auk/2)>(z-1) ac(a)\big\}.
\end{equation}
In the setting of Theorem~\ref{main_perturbed_weak convergence},
$$
\lim_{T_1\to\infty}\limsup_{a\to 0+}\mathbb{P}\big\{\sup_{k > \lfloor T_1 c(a)\rfloor}(S_k-auk/2)> ac(a)\big\}=0.
$$
by \eqref{eq:finsup2} and \eqref{eq:1}, whereas in the setting of Theorem~\ref{perturbed_weak} this follows from \eqref{eq:finsup2} and \eqref{eq:a2c(a)}. Indeed, for small enough $a>0$, \eqref{eq:a2c(a)} entails $c(a)\geq a^{-2}$, whence
$$
\mathbb{P}\big\{\sup_{k > \lfloor T_1 c(a)\rfloor}(S_k-auk/2)> ac(a)\big\}\leq \mathbb{P}\big\{a\sup_{k > \lfloor T_1 a^{-2}\rfloor}(S_k-auk/2)> a^2c(a)\big\}.
$$
In view of~\eqref{eq:finsup2}, the right-hand side converges to $0$ as $a\to 0+$.

As far as the second summand in~\eqref{eq:proof1} is concerned we argue as follows. For large $T_1>\max(2(1-z),0)$ and small $a>0$,
\begin{multline*}
\mathbb{P}\big\{\sup_{k > \lfloor T_1 c(a)\rfloor}(\zeta_{k+1}-auk/2)> (z-1)ac(a)\big\}\leq \sum_{k>\lfloor T_1 c(a)\rfloor }\mathbb{P}\left\{\zeta_{k+1}>auk/2+(z-1)ac(a)-a/2\right\}\\
\leq \int_{\lfloor T_1c(a)\rfloor}^\infty \mathbb{P}\{\zeta> aux/2+(z-1)ac(a)-a/2\}{\rm d}x
=\frac{2}{ua}\int_{(a/2)(\lfloor T_1c(a)\rfloor u+2(z-1)c(a)-1)}^\infty \mathbb{P}\{\zeta>x\}{\rm d}x\\
\sim~\frac{1}{ua}\frac{1}{\beta-1}(T_1 u+ 2(z-1)) ac(a)\mathbb{P}\{\zeta> (T_1u+2(z-1))ac(a)/2\}\\
\sim~\frac{1}{au}\frac{1}{\beta-1}(T_1u+ 2(z-1))ac(a)\mathbb{P}\{\zeta> (T_1u+2(z-1))b(c(a))/2\}\\
\sim~\frac{1}{au}\frac{2^\beta}{\beta-1}(T_1u+2(z-1))^{1-\beta} ac(a)\frac{1}{c(a)},\quad a\to 0+.
\end{multline*}
We have used Proposition 1.5.10 in \cite{Bingham+Goldie+Teugels:1989} for the first asymptotic equivalence. Since $\beta>1$, the right-hand side converges to zero as $T_1\to\infty$. This completes the proof of Proposition \ref{thm:ppp}.
\end{proof}

\section{Proofs of Theorems \ref{perturbed_weak0}, \ref{perturbed_weak} and \ref{main_perturbed_weak convergence}}
We shall prove all the results simultaneously. To this end, for $a,T, u>0$,  put $$m(a):=a,~c(a):=a^{-2},~ X_1(u,T):=\sup_{s\in [0,\,T]}\,(\sigma B(s)-us), ~ X_1(u, \infty):=\sup_{s\geq 0}\,(\sigma B(s)-us)$$ and $$X_1^\ast(u,T):=\sup_{s\geq T}\,(\sigma B(s)-us)$$ under the assumptions of Theorem \ref{perturbed_weak0}, $$m(a):=(ac(a))^{-1},~c(a)~\text{is as defined in Theorem \ref{perturbed_weak}},~ X_2(u,T):=\sup_{k:~t_k^{(1,\,\beta)}\leq T}\,\big(-ut_k^{(1,\,\beta)}+j_k^{(1,\,\beta)}\big)$$ $$X_2(u, \infty):=\sup_k\,\big(-ut_k^{(1,\,\beta)}+j_k^{(1,\,\beta)}\big)\quad\text{and}\quad X_2^\ast(u,T):=\sup_{k:~t_k^{(1,\,\beta)}\geq T}\,\big(-ut_k^{(1,\,\beta)}+j_k^{(1,\,\beta)}\big)$$ under the assumptions of Theorem \ref{perturbed_weak} and
$$
m(a):=a,~c(a):=a^{-2},~ X_3(u,T):=\sup_{k:~t_k^{(\lambda,\,2)}\leq T}\,\big(\sigma B(t_k^{(\lambda,\,2)})-u t_k^{(\lambda,\,2)}+j_k^{(\lambda,\,2)}\big)$$
\begin{multline*}
X_3(u, \infty):=\sup_k \big(\sigma  B(t_k^{(\lambda,\,2)})-u t_k^{(\lambda,\,2)}+j_k^{(\lambda,\,2)}\big)\\
\text{and}\quad X_3^\ast(u,T):=\sup_{k:~t_k^{(\lambda,\,2)}\geq T}\,\big(\sigma B(t_k^{(\lambda,\,2)})-u t_k^{(\lambda,\,2)}+j_k^{(\lambda,\,2)}\big)
\end{multline*}
under the assumptions of Theorem \ref{main_perturbed_weak convergence}.

Since the converging processes
$$
\big(m(a)\sup_{k\geq 0}\, (S_k-auk+\log \eta_{k+1})\big)_{u>0}\quad\text{and}\quad (m(a)\log \sum_{k\geq 0}\eee^{S_k-auk}\eta_{k+1})_{u>0}
$$
are a.s.\ nonincreasing and, by Lemma \ref{lem:cont}, the limit processes $(X_l(u, \infty))_{u>0}$, $l=1,2,3$ are a.s.\ continuous, weak convergence of probability measures in $C$ is equivalent to weak convergence of the corresponding finite-dimensional distributions. This follows from Skorokhod's representation theorem in combination with Dini's theorem.

Thus, limit relations \eqref{eq:supr}, \eqref{limit_perturbed1111} and \eqref{limit_suprem}, dealing with the convergence of suprema, are ensured by \eqref{eq:finsup2}, \eqref{eq:ppp_claim2} and \eqref{eq:ppp_bm_claim2} all with $T=0$, respectively, and the last remark. As far as the perpetuities are concerned, we have to show that
\begin{equation}\label{nocut}
\Big(m(a)\log \sum_{k\geq 0} \eee^{S_k-auk}\eta_{k+1}\Big)_{u>0}~\overset{{\rm f.d.}}{\longrightarrow}~(X_l(u,\infty))_{u>0},\quad a\to 0+,\quad l=1,2,3.
\end{equation}
As a preparation, we prove that, for any $T>0$,
\begin{equation}\label{cut}
\Big(m(a)\log \sum_{k=0}^{\lfloor Tc(a)\rfloor}\eee^{S_k-auk}\eta_{k+1}\Big)_{u>0}~\overset{{\rm f.d.}}{\longrightarrow}~(X_l(u,T))_{u>0},\quad a\to 0+,\quad l=1,2,3.
\end{equation}

Fix any $n\in\mn$, any $\gamma_1,\ldots,\gamma_n \in \mathbb{R}$ and any $0<u_1,u_2,\ldots,u_n<\infty$. Assume, without loss of generality, that $\gamma_1,\ldots, \gamma_{n_0}\geq 0$ and $\gamma_{n_0+1},\ldots, \gamma_n<0$ for some $n_0\in\mn_0$, $n_0\leq n$. In particular, the situation is allowed in which all $\gamma_j$ are of the same sign (in which case $n_0=0$ or $n_0=n$). In view of the Cram\'{e}r-Wold device, relation \eqref{cut} is equivalent to the following: for any $T>0$, as $a\to 0+$,
\begin{equation}\label{cut1}
m(a)\sum_{j=1}^n\gamma_j\log \sum_{k=0}^{\lfloor Tc(a)\rfloor}\eee^{S_k-au_jk}\eta_{k+1}{\overset{{\rm d}}\longrightarrow}\sum_{j=1}^n\gamma_j X_l(u_j,T),\quad l=1,2,3.
\end{equation}
To prove \eqref{cut1}, write, for any $T>0$,
\begin{multline*}
\sum_{j=1}^n\gamma_j\log \sum_{k=0}^{\lfloor Tc(a)\rfloor}\eee^{S_k-au_jk}\eta_{k+1}=\sum_{j=1}^{n_0} \gamma_j\log \sum_{k=0}^{\lfloor Tc(a)\rfloor}\eee^{S_k-au_jk}\eta_{k+1}+\sum_{j=n_0+1}^n \gamma_j\log \sum_{k=0}^{\lfloor Tc(a)\rfloor}\eee^{S_k-au_jk}\eta_{k+1}\\\leq \sum_{j=1}^{n_0} \gamma_j \big( \log (\lfloor Tc(a)\rfloor+1)+\max_{0\leq k\leq \lfloor Tc(a)\rfloor }\,(S_k-au_jk+\zeta_{k+1})\big)\\+\sum_{j=n_0+1}^n \gamma_j\max_{0\leq k\leq \lfloor Tc(a)\rfloor }\,(S_k-au_jk+\zeta_{k+1}),
\end{multline*}
where $\zeta_j=\log \eta_j$ for $j\in\mn$, and analogously
\begin{multline*}
\sum_{j=1}^n\gamma_j\log \sum_{k=0}^{\lfloor Tc(a)\rfloor}\eee^{S_k-au_jk}\eta_{k+1}\geq \sum_{j=1}^{n_0} \gamma_j \max_{0\leq k\leq \lfloor Tc(a)\rfloor }\,(S_k-au_jk+\zeta_{k+1})\\+\sum_{j=n_0+1}^n \gamma_j\big( \log (\lfloor Tc(a)\rfloor+1)+\max_{0\leq k\leq \lfloor Tc(a)\rfloor }\,(S_k-au_jk+\zeta_{k+1})\big).
\end{multline*}
With these at hand, \eqref{cut1} follows from
\begin{equation}\label{supr}
(m(a)\sup_{0\leq k\leq \lfloor Tc(a)\rfloor}\,(S_k-auk+\zeta_{k+1}))_{u>0}~\overset{{\rm f.d.}}{\longrightarrow}~(X_l(u,T))_{u>0},\quad a\to 0+,\quad l=1,2,3,
\end{equation}
(see \eqref{eq:finsup}, \eqref{eq:ppp_claim11} and \eqref{eq:ppp_bm_claim11}) and the fact that $\lim_{a\to 0+}m(a)\log c(a)=0$. In the setting of Theorem \ref{perturbed_weak0} the latter is justified by the regular variation of $c$ at $0+$ of index $-\beta/(\beta-1)$, see Remark \ref{rem1}. This particularly implies that the function $a\mapsto m(a)=(ac(a))^{-1}$ is regularly varying at $0+$ of positive index $(\beta-1)^{-1}$.

Plainly, $\lim_{T\to\infty} \sum_{j=1}^n\gamma_j X_l(u_j,T)=\sum_{j=1}^n\gamma_j X_l(u_j,\infty)$ a.s. Hence, according to Theorem 4.2 on p.~25 in \cite{Billingsley:1968} the proof of \eqref{nocut} is complete if we can show that, for all $\varepsilon>0$, $$\lim_{T\to\infty}{\lim\sup}_{a\to 0+}\mmp\Big\{m(a) \Big|\sum_{j=1}^n\gamma_j\Big(\log \sum_{k\geq 0} \eee^{S_k-au_jk}\eta_{k+1}-\log \sum_{k=0}^{\lfloor Tc(a)\rfloor}\eee^{S_k-au_jk}\eta_{k+1}\Big)\Big|>\varepsilon \Big\}=0.$$ By the triangle inequality, it is enough to prove that, with $u>0$ fixed, $$\lim_{T\to\infty}{\lim\sup}_{a\to 0+}\mmp\Big\{m(a)\Big(\log \sum_{k\geq 0} \eee^{S_k-auk}\eta_{k+1}-\log \sum_{k=0}^{\lfloor Tc(a)\rfloor}\eee^{S_k-auk}\eta_{k+1}\Big)>\varepsilon \Big\}=0.$$ The latter probability is upper bounded as follows:
\begin{multline*}
\leq \mmp\Big\{m(a)\Big(\log^+\sum_{k\geq 0} \eee^{S_k-auk}\eta_{k+1}-\log \sum_{k=0}^{\lfloor Tc(a)\rfloor}\eee^{S_k-auk}\eta_{k+1}\Big)>\varepsilon,\,\log \sum_{k=0}^{\lfloor T c(a)\rfloor}\eee^{S_k-auk}\eta_{k+1}\leq 0\Big\}\\+\mmp\Big\{m(a)\Big(\log^+\sum_{k\geq 0} \eee^{S_k-auk}\eta_{k+1}-\log \sum_{k=0}^{\lfloor Tc(a)\rfloor}\eee^{S_k-auk}\eta_{k+1}\Big)>\varepsilon,\,\log \sum_{k=0}^{\lfloor Tc(a)\rfloor}\eee^{S_k-auk}\eta_{k+1}>0\Big\}\\\leq \mmp\Big\{m(a)\log \sum_{k=0}^{\lfloor Tc(a)\rfloor}\eee^{S_k-auk}\eta_{k+1}\leq 0\Big\}\\+\mmp\Big\{m(a)\Big(\log^+\sum_{k\geq 0} \eee^{S_k-auk}\eta_{k+1}-\log^+ \sum_{k=0}^{\lfloor Tc(a)\rfloor}\eee^{S_k-auk}\eta_{k+1}\Big)>\varepsilon\Big\}.
\end{multline*}
The first probability on the right-hand side converges to $0$ as $a\to 0+$. This is secured by \eqref{cut1} with $n=1$ and $\gamma_1=1$, and the fact that the right-hand sides in \eqref{cut1} are a.s.\ positive. The latter follows from \eqref{eq:expo} and Proposition \ref{prop:marginal}. To proceed, we need two inequalities:
\begin{equation}\label{ineq2}
\log^+(x+y)\leq \log^+(x)+\log^+(y)+2\log 2,\quad x,y\geq 0
\end{equation}
and
\begin{equation}\label{ineq5000}
\log^+(xy)\leq \log^+x +\log^+y,\quad x,y\geq 0.
\end{equation}
Inequality \eqref{ineq2} follows from
$$\log^+ (x)\leq \log (1+x)\leq \log^+ (x)+\log 2,\quad x\geq 0$$ and
the subadditivity of $x\mapsto \log (1+x)$, namely,
\begin{eqnarray*}
\log^+(x+y)&\leq& \log (1+x+y)\leq \log (1+x)+\log (1+y)\notag\\&\leq& \log^+ (x)+\log^+(y)+2\log 2,\quad
x,y\geq 0.
\end{eqnarray*}
Inequality \eqref{ineq5000} is a consequence of
the subadditivity of $x\to x^+$.

In view of \eqref{ineq2}, it remains to prove that
\begin{equation}\label{eq:intermed}
\lim_{T\to\infty}{\lim\sup}_{a\to 0+}\mmp\Big\{m(a)\log^+ \sum_{k>\lfloor Tc(a)\rfloor} \eee^{S_k-auk}\eta_{k+1}>\varepsilon \Big\}=0.
\end{equation}
To this end, write, with the help of \eqref{ineq5000},  $$\log^+ \sum_{k>\lfloor Tc(a)\rfloor} \eee^{S_k-auk}\eta_{k+1}\leq (\sup_{k>\lfloor Tc(a)\rfloor}\,(S_k-auk/2+\zeta_{k+1}))^++\log^+\sum_{k>\lfloor Tc(a)\rfloor} \eee^{-auk/2}.$$ While $\lim_{a\to 0+} \log^+\sum_{k>\lfloor Tc(a)\rfloor} \eee^{-auk/2}=0$, formulae \eqref{eq:finsup2}, \eqref{eq:ppp_claim2} and \eqref{eq:ppp_bm_claim2} entail $$m(a)(\sup_{k>\lfloor Tc(a)\rfloor}\,(S_k-auk/2+\zeta_{k+1}))^+~\overset{{\rm
d}}{\longrightarrow}~(X_l^\ast(u/2,T))^+,\quad a\to 0+,~l=1,2,3.$$ Finally, by Lemma \ref{lem:tail}, $\lim_{T\to\infty}\,(X_l^\ast(u/2,T))^+=0$ a.s., and \eqref{eq:intermed} follows.

The proof of Theorems \ref{perturbed_weak0}, \ref{perturbed_weak} and \ref{main_perturbed_weak convergence} is complete.

\section{Proof of Theorem \ref{thm:lil} and Corollary \ref{corr:lil}}

The following deterministic result is a consequence of Corollary 4.12.5 in \cite{Bingham+Goldie+Teugels:1989}.
\begin{assertion}\label{asser:det}
Let $A\in (0,\infty)$ and $\mu$ be a locally finite measure on $[0,\infty)$. Assume that the function $\varphi$ is regularly varying at $\infty$ of index $\alpha>1$ and put $\psi(t):=\varphi(t)/t$ for large $t$. Then
\begin{equation}\label{asser:claim1}
{\lim\sup}_{x\to\infty}\frac{\log \mu ([0,\,\varphi(x)])}{x}=A
\end{equation}
if, and only if,
\begin{equation}\label{asser:claim2}
{\lim\sup}_{\lambda \to\infty}\frac{\log \int_{[0,\,\infty)}\eee^{-x/\psi(\lambda)} \mu ({\rm d}x)}{\lambda}=(\alpha-1)\Big(\frac{A}{\alpha}\Big)^{\alpha/(\alpha-1)}.
\end{equation}
\end{assertion}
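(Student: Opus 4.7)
The plan is to deduce Proposition \ref{asser:det} from Corollary 4.12.5 of \cite{Bingham+Goldie+Teugels:1989}, which is an exponential Tauberian theorem in the spirit of Kasahara relating the rate of growth of a nondecreasing function on $[0,\infty)$ to the rate of decay of its Laplace--Stieltjes transform under a regular variation hypothesis.

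First, I would rewrite \eqref{asser:claim1} so that the argument of $\mu([0,\cdot])$ is the free variable. Since $\varphi$ is regularly varying at $\infty$ of index $\alpha>1$, it admits an asymptotic inverse $\varphi^{\leftarrow}$, which is regularly varying of index $1/\alpha$, so after the substitution $y=\varphi(x)$ relation \eqref{asser:claim1} is equivalent to
$$
{\lim\sup}_{y\to\infty}\,\frac{\log \mu([0,\,y])}{\varphi^{\leftarrow}(y)}=A.
$$
Symmetrically, the substitution $s:=1/\psi(\lambda)$ recasts \eqref{asser:claim2}: since $\psi$ is regularly varying of positive index $\alpha-1$, $\psi^{\leftarrow}$ is regularly varying of index $1/(\alpha-1)$ and $\lambda\sim \psi^{\leftarrow}(1/s)$ as $s\to 0+$, whence \eqref{asser:claim2} is equivalent to
$$
{\lim\sup}_{s\to 0+}\,\frac{\log \int_{[0,\,\infty)}\eee^{-sx}\mu({\rm d}x)}{\psi^{\leftarrow}(1/s)}=(\alpha-1)\Big(\frac{A}{\alpha}\Big)^{\alpha/(\alpha-1)}.
$$

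Second, I would match this reformulated pair against Corollary 4.12.5 in \cite{Bingham+Goldie+Teugels:1989}, identifying the rate function on the measure side with the regularly varying function $A\varphi^{\leftarrow}$ and the rate function on the Laplace side with its Young/Legendre conjugate. The specific constant $(\alpha-1)(A/\alpha)^{\alpha/(\alpha-1)}$ is then pinned down by the heuristic saddle-point evaluation of $\sup_{z\geq 0}\,(Az-\varphi(z)/\psi(\lambda))$: writing $z=c\lambda$ and using $\varphi(c\lambda)/\psi(\lambda)\sim c^{\alpha}\lambda$ (from regular variation of index $\alpha$), this expression is asymptotic to $\lambda(Ac-c^{\alpha})$; optimizing over $c\geq 0$ yields $c_\ast=(A/\alpha)^{1/(\alpha-1)}$ and supremum value $\lambda(\alpha-1)(A/\alpha)^{\alpha/(\alpha-1)}$, matching the constant on the right-hand side of the reformulation of \eqref{asser:claim2}.

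The main obstacle is to locate in \cite{Bingham+Goldie+Teugels:1989} the exact version of the exponential Tauberian theorem that (i) is formulated with $\limsup$ rather than $\lim$, (ii) applies to locally finite measures and their Laplace--Stieltjes transforms, and (iii) produces the Young/Legendre conjugate of the linear rate $A\varphi^{\leftarrow}$ with the stated numerical constant; once this is in hand, the remainder of the proof is the change-of-variables step described above. Auxiliary regularity of $\varphi^{\leftarrow}$ needed to justify the substitutions (e.g.\ asymptotic monotonicity) follows from the Karamata representation, and finiteness of the Laplace transform on $(0,\infty)$ is automatic from the local finiteness of $\mu$ together with the growth encoded by either side of the equivalence.
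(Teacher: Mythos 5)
Your instinct to reduce the statement to Corollary 4.12.5 in \cite{Bingham+Goldie+Teugels:1989} is correct and is exactly the paper's strategy, but your proposal stops short at precisely the step that requires an actual argument. You defer the entire content to ``locating the exact version of the exponential Tauberian theorem that \ldots is formulated with $\limsup$ rather than $\lim$''; however, no such version with \emph{equalities} on both sides is in that corollary. What Corollary 4.12.5 supplies is a pair of one-sided implications between \emph{upper bounds}: from ${\lim\sup}_{x\to\infty}\log\mu([0,\varphi(x)])/x \leq A$ one gets ${\lim\sup}_{\lambda\to\infty}\log\int\eee^{-x/\psi(\lambda)}\mu({\rm d}x)/\lambda \leq (\alpha-1)(A/\alpha)^{\alpha/(\alpha-1)}$, and conversely. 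Upgrading this to an equivalence of $\limsup$ \emph{equalities} requires an extra argument, and this is the real content of the proof.

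The paper supplies it by contradiction: assuming \eqref{asser:claim1}, the first implication gives $\leq$ in \eqref{asser:claim2}; if that inequality were strict, one could write the smaller right-hand side as $(\alpha-1)(A(\varepsilon)/\alpha)^{\alpha/(\alpha-1)}$ for some $A(\varepsilon)<A$ (using that the map $B\mapsto(\alpha-1)(B/\alpha)^{\alpha/(\alpha-1)}$ is a continuous strictly increasing bijection of $(0,\infty)$ for $\alpha>1$), and the second implication would then force ${\lim\sup}_{x\to\infty}\log\mu([0,\varphi(x)])/x\leq A(\varepsilon)<A$, contradicting \eqref{asser:claim1}. The reverse direction is symmetric. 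This is what you are missing; without it you have shown nothing beyond citing the corollary. Your preliminary change of variables via $\varphi^{\leftarrow}$ and $\psi^{\leftarrow}$ is also unnecessary---Corollary 4.12.5 is already stated directly in terms of $\varphi$ and $\psi=\varphi/{\rm id}$, and the paper invokes it without any substitution---and your saddle-point computation merely reconfirms the constant that the corollary already provides. So the proposal is not wrong in its choice of tool, but it omits the one step (the strict-inequality contradiction) that turns an inequality-to-inequality Tauberian theorem into the claimed equality-to-equality equivalence.
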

\begin{proof}
Assume that \eqref{asser:claim1} holds. Then, according to the first implication in Corollary 4.12.5 in \cite{Bingham+Goldie+Teugels:1989},
\begin{equation}\label{eq:BGT_cor}
{\lim\sup}_{\lambda \to\infty}\frac{\log \int_{[0,\,\infty)}\eee^{-x/\psi(\lambda)} \mu ({\rm d}x)}{\lambda}\leq (\alpha-1)\Big(\frac{A}{\alpha}\Big)^{\alpha/(\alpha-1)}.
\end{equation}
Suppose that the above inequality is strict, that is, for some $\varepsilon>0$ and $A(\varepsilon)<A$,
$$
{\lim\sup}_{\lambda \to\infty}\frac{\log \int_{[0,\,\infty)}\eee^{-x/\psi(\lambda)} \mu ({\rm d}x)}{\lambda}=(\alpha-1)\Big(\frac{A}{\alpha}\Big)^{\alpha/(\alpha-1)}-\varepsilon=(\alpha-1)\Big(\frac{A(\varepsilon)}{\alpha}\Big)^{\alpha/(\alpha-1)}.
$$
Then the second implication in the aforementioned Corollary 4.12.5 yields
$$
{\lim\sup}_{x\to\infty}\frac{\log \mu ([0,\,\varphi(x)])}{x}\leq A_1(\varepsilon)<A,
$$
which is a contradiction. Thus, the inequality in \eqref{eq:BGT_cor} can be replaced by the equality. The inverse implication follows analogously.
\end{proof}

The following result is needed for the proof of Theorem \ref{thm:lil}(a) and also of independent interest.
\begin{thm}\label{thm:suprLIL}
Suppose that \eqref{10} and \eqref{eq:oneta} hold. Then
\begin{equation}\label{eq:inter1}
{\lim\sup}_{t\to\infty}\frac{\max_{0\leq k\leq \lfloor t\rfloor}\,(S_k+\log \eta_{k+1})}{(t\log\log t)^{1/2}}=2^{1/2}\sigma\quad\text{{\rm a.s.}}
\end{equation}
and
\begin{equation}\label{eq:inter2}
{\lim\sup}_{t\to\infty}\frac{\log \sum_{k=0}^{\lfloor t\rfloor}\eee^{S_k}\eta_{k+1}}{(t\log\log t)^{1/2}}=2^{1/2}\sigma\quad\text{{\rm a.s.}}
\end{equation}
\end{thm}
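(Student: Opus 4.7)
I will prove \eqref{eq:inter1} first and then deduce \eqref{eq:inter2} from the deterministic sandwich
\[
\max_{0\leq k\leq \lfloor t\rfloor}(S_k+\log\eta_{k+1})\leq \log\sum_{k=0}^{\lfloor t\rfloor}\eee^{S_k}\eta_{k+1}\leq \log(\lfloor t\rfloor+1)+\max_{0\leq k\leq \lfloor t\rfloor}(S_k+\log\eta_{k+1}),
\]
combined with the trivial bound $\log(\lfloor t\rfloor+1)=o((t\log\log t)^{1/2})$, which transfers the limsup from the maximum to the logarithm of the sum.

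Set $\zeta_k:=\log\eta_k$. For the upper bound in \eqref{eq:inter1}, use $\max_{k\leq n}(S_k+\zeta_{k+1})\leq\max_{k\leq n}S_k+\max_{k\leq n}\zeta_{k+1}^+$. Under \eqref{10}, the Hartman--Wintner law of the iterated logarithm gives $\limsup_n\max_{k\leq n}S_k/(2n\log\log n)^{1/2}=\sigma$ a.s. On the other hand, \eqref{eq:oneta} is equivalent to $\sum_n\mmp\{\zeta^+>\sqrt{n\log\log n}\}<\infty$ by the layer-cake identity, and by the scaling $\zeta\mapsto\zeta/c$ this forces $\sum_n\mmp\{\zeta^+>c\sqrt{n\log\log n}\}<\infty$ for every $c>0$. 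The Borel--Cantelli lemma then yields $\zeta_n^+=o((n\log\log n)^{1/2})$ a.s., and a standard running-maximum argument (split at an a.s.\ finite index beyond which $\zeta_k^+\leq\varepsilon\sqrt{k\log\log k}$) promotes this to $\max_{k\leq n}\zeta_{k+1}^+=o((n\log\log n)^{1/2})$ a.s. Combining, $\limsup\max_{k\leq n}(S_k+\zeta_{k+1})/(n\log\log n)^{1/2}\leq\sqrt{2}\,\sigma$.

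For the lower bound, fix $\delta\in(0,\sigma)$. The LIL ensures that the random set $\{n:S_n\geq(\sigma-\delta)(2n\log\log n)^{1/2}\}$ is a.s.\ infinite; enumerate its elements in increasing order as $n_1<n_2<\cdots$. Each $n_j$ is a stopping time with respect to the sub-filtration $\mathcal{F}_n^\xi:=\sigma(\xi_1,\ldots,\xi_n)$ of the natural pair filtration $\mathcal{F}_n:=\sigma((\xi_1,\eta_1),\ldots,(\xi_n,\eta_n))$. Iterated application of the strong Markov property to the i.i.d.\ pair sequence at these stopping times shows that $((\xi_{n_j+1},\eta_{n_j+1}))_{j\geq 1}$ is i.i.d.\ with the joint law of $(\xi,\eta)$, and hence $(\zeta_{n_j+1})_{j\geq 1}$ is an i.i.d.\ copy of $\zeta$. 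Fix $M>0$ with $\mmp\{\zeta\geq -M\}>0$ (possible since $\zeta$ is a.s.\ finite); the strong law of large numbers applied to $(\1_{\{\zeta_{n_j+1}\geq -M\}})_{j\geq 1}$ gives a.s.\ infinitely many $j$ with $\zeta_{n_j+1}\geq -M$, whence
\[
\max_{k\leq n_j}(S_k+\zeta_{k+1})\geq S_{n_j}+\zeta_{n_j+1}\geq(\sigma-\delta)(2n_j\log\log n_j)^{1/2}-M
\]
for infinitely many $j$. Therefore $\limsup\max_{k\leq n}(S_k+\zeta_{k+1})/(n\log\log n)^{1/2}\geq\sqrt{2}(\sigma-\delta)$ a.s., and letting $\delta\downarrow 0$ closes \eqref{eq:inter1}.

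The main technical difficulty lies in the lower bound. Because the components $\xi$ and $\eta$ may be arbitrarily dependent within each pair, the perturbation $\zeta_{n_j+1}=\log\eta_{n_j+1}$ is a priori coupled to $\xi_{n_j+1}$, which participates in the criterion defining later $n_i$. The resolution exploits that the $n_j$ are extracted from the LIL for $S$ alone and are thus $\mathcal{F}_n^\xi$-measurable; the strong Markov property at $n_j$, applied to the i.i.d.\ pair process (for which $\mathcal{F}_n^\xi\subseteq\mathcal{F}_n$ guarantees that $n_j$ is also an $\mathcal{F}_n$-stopping time), then preserves the full joint law of $(\xi,\eta)$ beyond $n_j$, and this is what turns the selected perturbations into a genuinely i.i.d.\ sample of $\zeta$.
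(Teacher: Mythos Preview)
Your argument is correct. The sandwich reducing \eqref{eq:inter2} to \eqref{eq:inter1} and the upper bound (Hartman--Wintner plus Borel--Cantelli on $\zeta^+$) match the paper's proof essentially verbatim. The lower bound, however, takes a genuinely different route. The paper, for each $n$, introduces the first-passage time $\tau_n:=\inf\{k\le n:S_k>x_n(\delta)-\gamma\}$, sets $B_n=\{\tau_n\le n\}$, $C_n=\{\zeta_{\tau_n+1}>\gamma\}$, observes $B_n\cap C_n\subseteq A_n$, and invokes the conditional (L\'evy) Borel--Cantelli lemma for the filtration $\mathcal F_n=\sigma((\xi_k,\zeta_k)_{k\le n})$ to conclude $\mmp\{A_n\ \text{i.o.}\}=1$ from $\sum_n\1_{B_n}=\infty$. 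Your approach instead enumerates the LIL-exceedance times $n_1<n_2<\cdots$ of the walk, notes they are $\mathcal F_n^\xi$-stopping times (hence $\mathcal F_n$-stopping times), and applies the strong Markov property iteratively to obtain that $(\zeta_{n_j+1})_{j\ge1}$ is an i.i.d.\ copy of $\zeta$; a single appeal to the strong law for the Bernoulli sequence $\1_{\{\zeta_{n_j+1}\ge -M\}}$ then produces infinitely many good indices. Your route is arguably cleaner: it avoids the $n$-dependent level and the associated measurability bookkeeping (in the paper's setup $\tau_n$ may be strictly less than $n$, so $\zeta_{\tau_n+1}$ can already sit inside $\mathcal F_n$, which makes the conditional computation delicate), and it isolates transparently the one place where the dependence between $\xi$ and $\eta$ could cause trouble, namely the distribution of $\eta_{n_j+1}$ after selection by the $\xi$-path. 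The paper's approach, on the other hand, packages everything into a single application of the conditional Borel--Cantelli lemma without constructing a random subsequence.
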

\begin{proof}
In view of $$\max_{0\leq k\leq \lfloor t\rfloor}\,(S_k+\log \eta_{k+1})\leq \log \sum_{k=0}^{\lfloor t\rfloor}\eee^{S_k}\eta_{k+1}\leq \log(\lfloor t\rfloor+1)+\max_{0\leq k\leq \lfloor t\rfloor}\,(S_k+\log \eta_{k+1})\quad\text{a.s.},$$ it suffices to prove \eqref{eq:inter1}. Furthermore, when doing so we can and do replace $\lfloor t\rfloor$ with integer $n$.

Note that
\begin{equation}\label{eq:lilsupr}
{\lim\sup}_{n\to\infty}\frac{\max_{0\leq k\leq  n}\,S_k}{(n\log\log n)^{1/2}}=2^{1/2}\sigma\quad\text{a.s.},
\end{equation}
see, for instance, p.~439 in \cite{Bingham:1986}. Recall the notation $\zeta=\log \eta$ and $\zeta_k=\log \eta_k$ for $k\in\mn$. The assumption $\me f(\zeta^+)<\infty$ in combination with the Borel-Cantelli lemma entails
$$
\lim_{n\to\infty} (n\log\log n)^{-1/2}\zeta^+_n=0\quad\text{a.s.}
$$
and thereupon $\lim_{n\to\infty} (n\log\log n)^{-1/2}\max_{1\leq k\leq n+1}\, \zeta^+_k=0$ a.s. Using the latter, \eqref{eq:lilsupr} and $$\max_{0\leq k\leq n}\,(S_k+\zeta_{k+1})\leq \max_{0\leq k\leq n}\,S_k+\max_{1\leq k\leq n+1}\,\zeta^+_k\quad \text{a.s.}$$ we infer $${\lim\sup}_{n\to\infty}\frac{\max_{0\leq k\leq n}\,(S_k+\zeta_{k+1})}{(n\log\log n)^{1/2}}\leq 2^{1/2}\sigma \quad\text{a.s.}$$

Fix any $\delta\in (0,1)$, put $x_n(\delta):=(1-\delta)2^{1/2}\sigma (n\log\log n)^{1/2}$ for $n\geq 3$ and define the event $$A_n=A_n(\delta):=\{\max_{0\leq k\leq n}\,(S_k+\zeta_{k+1})>x_n(\delta)\},\quad n\geq 3.$$ Our purpose is to show that $$\mmp\{A_n~~\text{i.o.}\}=1.$$ Here, as usual, `${\rm i.o.}$' is a shorthand for `infinitely often' and $\{A_n~~\text{i.o.}\}=\cap_{n\geq 3}\cup_{k\geq n} A_k$. Pick any $\gamma\in\mathbb{R}$ satisfying $\mmp\{\zeta>\gamma\}>0$. For $n\geq 3$, put $\tau_n:=\inf\{k\leq n: S_k>x_n(\delta)-\gamma\}$ on the event $\{\max_{0\leq k\leq n}\,S_k>x_n(\delta)-\gamma\}$ and $\tau_n:=+\infty$ on the complementary event. Now define the events $$B_n=B_n(\delta):=\{\max_{0\leq k\leq n}\,S_k>x_n(\delta)-\gamma\}\quad\text{and}\quad C_n=C_n(\delta):=\{\zeta_{\tau_n+1}>\gamma\},\quad n\geq 3.$$ Observe that, for each $n\geq 3$, $B_n\cap C_n\subseteq A_n$, whence $$\Big\{\sum_{n\geq 3}\1_{B_n\cap C_n}=\infty\Big\}=\{B_n\cap C_n~~\text{i.o.}\}\subseteq \{A_n~~\text{i.o.}\}.$$ For $n\geq 3$, denote by $\mathcal{F}_n$ the $\sigma$-algebra generated by $((\xi_k, \zeta_k))_{1\leq k\leq n}$. In view of $$\zeta_{\tau_n+1}\1_{\{\tau_n\leq n\}}=\zeta_1\1_{\{x_n(\delta)<\gamma\}}+\sum_{k=1}^n \zeta_{k+1}\1_{\{\max_{0\leq j\leq k-1}\,S_j\leq x_n(\delta)-\gamma,\, S_k>x_n(\delta)-\gamma\}},$$ we conclude that $B_n\cap C_n\in \mathcal{F}_{n+1}$. Hence, by the conditional Borel-Cantelli lemma (see, for instance, Theorem 5.3.2 on p.~240 in \cite{Durrett:2010}), $$\mmp\Big\{\sum_{n\geq 3}\1_{B_n\cap C_n}=\infty\Big\}=\mmp\Big\{\sum_{n\geq 3}\mmp\{B_n\cap C_n|\mathcal{F}_n\}=\infty\Big\}.$$ Since $$\sum_{n\geq 3}\mmp\{B_n\cap C_n|\mathcal{F}_n\}=\mmp\{\theta>\gamma\}\sum_{n\geq 3}\1_{B_n}$$ and  \eqref{eq:lilsupr} secures $\mmp\{\sum_{n\geq 3}\1_{B_n}=\infty\}=1$, we infer $\mmp\{B_n\cap C_n~~\text{i.o.}\}=1= \mmp\{A_n~~\text{i.o.}\}$. The proof of Theorem \ref{thm:suprLIL} is complete.
\end{proof}
\begin{rem}\label{rem2}
If \eqref{10} holds and \eqref{eq:oneta} does not hold, then both \eqref{eq:inter1} and \eqref{eq:inter2} fail to hold. As a consequence, so does \eqref{eq:perpLIL} as follows from Proposition \ref{asser:det}. By the Borel-Cantelli lemma, the condition  $\me f(\zeta^+)=\infty$ entails $${\lim\sup}_{n\to\infty}\frac{\max_{1\leq k\leq n+1}\,\zeta_k}{(n\log\log n)^2}={\lim\sup}_{n\to\infty}\frac{\max_{1\leq k\leq n+1}\,\zeta^+_k}{(n\log\log n)^2}=+\infty\quad\text{a.s.}$$ Using this, relation \eqref{eq:lilsupr}, applied to $(-S_k)$ instead of $S_k$, and $$\max_{0\leq k\leq n}\,(S_k+\zeta_{k+1})\geq \max_{1\leq k\leq n+1}\,\zeta_k-\max_{0\leq k\leq n}\,(-S_k)\quad \text{a.s.}$$ we infer $${\lim\sup}_{n\to\infty}\frac{\max_{0\leq k\leq n}\,(S_k+\zeta_{k+1})}{(n\log\log n)^2}=+\infty\quad\text{a.s.}$$
\end{rem}

\begin{proof}[Proof of Theorem \ref{thm:lil}]
Both parts of the theorem will be proved by an application of Proposition \ref{asser:det}. In addition, we find it instructive to give an alternative, more probabilistic proof of the relation ${\lim\sup}_{a\to 0+}\leq 1$ a.s.\ in part (b) which takes an advantage of formula \eqref{Urban}.

\noindent (b) In the setting of Proposition \ref{asser:det}, let $\mu$ be a {\it random} measure defined by $$\mu([0,\,t]):=\int_0^t \eee^{B(s)}{\rm d}s,\quad t\geq 0,$$ and put $\psi(x):=x/\log\log x$ for $x\geq x_0$, where $x_0>\eee$ is chosen to ensure that $\psi$, hence $x\mapsto f(x)=x^2/\log\log x$, are strictly increasing and continuous on $(x_0, \infty)$. We intend to show that
\begin{equation}\label{eq:inter}
{\lim\sup}_{t\to\infty}\frac{\log \int_0^t \eee^{B(s)}{\rm d}s}{(t\log\log t)^{1/2}}=2^{1/2}\quad\text{a.s.}
\end{equation}
On the one hand, $\log \int_0^t \eee^{B(s)}{\rm d}s\leq \log t+\max_{s\in [0,\,t]}\,B(s)$ a.s. On the other hand, let $\tau_t\in [0,\,t]$ denote any (random) point satisfying $B(\tau_t)=\max_{s\in [0,\,t]}\,B(s)$ a.s. Then, given $\varepsilon>0$ there exists a random $\delta\in (0,1)$ such that $B(u)\geq \max_{s\in [0,\,t]}\,B(s)-\varepsilon$ whenever $u\in (\tau_t-\delta, \tau_t+\delta)\cap (0,\infty)$. This yields $\log \int_0^t \eee^{B(u)}{\rm d}u\geq \log \int_{\max(\tau_t-\delta,\,0)}^{\tau_t+\delta} \eee^{B(u)}{\rm d}u\geq \max_{s\in [0,\,t]}\,B(s)-\varepsilon+\log \delta$ a.s. Now relation \eqref{eq:inter} follows from the two inequalities and $${\lim\sup}_{t\to\infty}\frac{\max_{s\in [0,\,t]}\,B(s)}{(t\log\log t)^{1/2}}=2^{1/2}\quad\text{a.s.}$$ For the latter, see, for instance, p.~439 in \cite{Bingham:1986}.

Formula \eqref{eq:inter} entails an a.s.\ version of \eqref{asser:claim1}, with the present choice of $\mu$, $\varphi=f$, $A=2^{1/2}$ and $\alpha=2$. By Proposition \ref{asser:det}, \eqref{asser:claim2} holds with the right-hand side being equal to $1/2$. Hence, $${\lim\sup}_{a\to 0+} \frac{\log \int_{[0,\,\infty)}\eee^{-ax} \mu ({\rm d}x)}{\psi^\leftarrow(1/a)}=2^{-1}\quad \text{a.s.},$$ where $\psi^\leftarrow$ the generalized inverse function of $\psi$ satisfies $\psi^\leftarrow(t)\sim t\log\log t$ as $t\to\infty$. This proves \eqref{eq:brm}.

Here is the promised probabilistic proof. Fix any $r>1$. In view of \eqref{Urban}, a distribution density of $r^{-n}\big(\log\int_0^\infty \eee^{B(s)-r^{-(n+1)}s}{\rm d}s-\log 2\big)$ is $$x\mapsto \frac{r^n\eee^{-2r^{-1}x}\exp(-\eee^{-r^n x})}{\Gamma(2r^{-(n+1)})},\quad x\to\infty.$$ Hence, for all $\varepsilon>0$,
\begin{multline*}
\mmp\Big\{r^{-n}\Big(\log\int_0^\infty \eee^{B(s)-r^{-(n+1)}s}{\rm d}s-\log 2\Big)> 2^{-1}r(1+\varepsilon)\log \log r^n\Big\}\\=\frac{r^n}{\Gamma(2r^{-(n+1)})}\int_{2^{-1}r(1+\varepsilon)\log (n\log r)}^\infty \eee^{-2r^{-1}x}\exp(-\eee^{-r^nx}){\rm d}x\\\leq \frac{r^n}{\Gamma(2r^{-(n+1)})}\int_{2^{-1}r(1+\varepsilon)\log (n\log r)}^\infty \eee^{-2r^{-1}x}{\rm d}x=\frac{1}{2 r^{-(n+1)}\Gamma(2r^{-(n+1)})\eee^{(1+\varepsilon)\log (n\log r)}}~\sim~ \frac{1}{n^{1+\varepsilon}}
\end{multline*}
as $n\to\infty$ having utilized $\lim_{x\to 0+}x\Gamma(x)=1$. Thus, by the Borel-Cantelli lemma, $${\lim\sup}_{n\to\infty}\frac{2\log\int_0^\infty \eee^{B(s)-r^{-(n+1)}s}{\rm d}s}{r^n \log \log r^n}\leq r \quad \text{a.s.}$$ For each $a\in (0,1]$ there exists $n\in\mn_0$ such that $a\in [r^{-(n+1)}, r^{-n}]$ and, by monotonicity, for such $a$, $$\frac{2a \log\int_0^\infty \eee^{B(s)-as}{\rm d}s}{\log \log (1/a)}\leq \frac{2 \log\int_0^\infty \eee^{B(s)-r^{-(n+1)}s}{\rm d}s}{r^n \log (n \log r )}\quad \text{a.s.},$$ whence ${\lim\sup}_{a\to 0+}\frac{2a \log\int_0^\infty \eee^{B(s)-as}{\rm d}s}{\log \log (1/a)}\leq 1$ a.s.\ because $r>1$ is arbitrary.

\noindent (a) Let $\mu$ be a {\it random} measure defined by $$\mu([0,\,t]):=\sum_{k=0}^{\lfloor t\rfloor} \eee^{S_k}\eta_{k+1},\quad t\geq 0$$ and take the same $\varphi$ and $\psi$ as in the proof of part (a), so that $\alpha=2$. By Theorem \ref{thm:suprLIL}, relation \eqref{eq:inter2} holds. With this at hand, the rest of the proof mimics that of part (b). Note that there is the additional factor $\sigma$ which was absent in the proof of part (a). In particular, $A=2^{1/2}\sigma$ rather than $2^{1/2}$.
\end{proof}
\begin{proof}[Proof of Corollary \ref{corr:lil}]
Under the assumptions of Theorem \ref{thm:lil}(a), invoking \eqref{limit_perturbed0} yields $$\frac{a\log\sum_{k\geq 0}\eee^{S_k-ak}\eta_{k+1}}{\log\log 1/a}~\overset{\mmp}{\longrightarrow}~0,\quad a\to 0+.$$ This in combination with the inequality $\log Y(a)\geq 0$ a.s.\ for small $a>0$, which is a consequence of $\lim_{a\to 0+} \sum_{k\geq 0}\eee^{S_k-ak}\eta_{k+1}=+\infty$ a.s., yields 
$$
\liminf_{a\to 0+}\frac{a\log \sum_{k\geq 0}\eee^{S_k-ak}\eta_{k+1}}{\log\log (1/a)}=0\quad\text{a.s.}
$$
In the setting of Theorem \ref{thm:lil}(b), relation \eqref{eq:convtoexp} entails $${\lim\inf}_{a\to 0+}\frac{a\log\int_0^\infty \eee^{B(s)-as}{\rm d}s}{\log \log 1/a}=0\quad \text{a.s.}$$ Both claims follow from the last two limit relations and Theorem \ref{thm:lil} with the help of the intermediate value theorem for continuous functions.
\end{proof}

\noindent {\bf Acknowledgement}. A. Iksanov and A. Marynych were supported by the National Research Foundation of Ukraine
(project 2020.02/0014 `Asymptotic regimes of perturbed random walks: on the edge of modern and classical probability').

\end{document}